\numberwithin{equation}{section}
\theoremstyle{plain}
\newtheorem{lemma}{Lemma}[section]
\newtheorem{theorem}[lemma]{Theorem}
\newtheorem{proposition}[lemma]{Proposition}
\newtheorem{definition}[lemma]{Definition}
\newtheorem*{proposition*}{Proposition}
\newtheorem*{theorem*}{Theorem}
\newtheorem*{definition*}{Definition}
\newtheorem*{claim*}{Claim}
\newtheorem*{notation*}{Notation}
\newtheorem{remark}[lemma]{Remark}
\newcommand{\N}{{\mathbb{N}}}
\newcommand{\Z}{{\mathbb{Z}}}
\newcommand{\ol}{\overline}
\newcommand{\uloopr}[1]{\ar@'{@+{[0,0]+(-4,5)}@+{[0,0]+(0,10)}@+{[0,0] +(4,5)}}^{#1}}
\newcommand{\uloopd}[1]{\ar@'{@+{[0,0]+(5,4)}@+{[0,0]+(10,0)}@+{[0,0]+ (5,-4)}}^{#1}}
\newcommand{\dloopr}[1]{\ar@'{@+{[0,0]+(-4,-5)}@+{[0,0]+(0,-10)}@+{[0, 0]+(4,-5)}}_{#1}}
\newcommand{\dloopd}[1]{\ar@'{@+{[0,0]+(-5,4)}@+{[0,0]+(-10,0)}@+{[0,0 ]+(-5,-4)}}_{#1}}
\newcommand{\Typ}{\mbox{\rm Typ}}
\newcommand{\luloop}[1]{\ar@'{@+{[0,0]+(-8,2)}@+{[0,0]+(-10,10)}@+{[0, 0]+(2,2)}}^{#1}}
\newcommand{\AC}[2]{G_{#1}[#2]}
\newcommand{\Ifree}{I_{\mathrm{free}}}
\newcommand{\Ireg}{I_{\mathrm{reg}}}
\DeclareMathOperator{\rL}{L}
\newdimen \boxht
\begin{document}
\title[Refinement Monoids and Adaptable Separated Graphs]{Refinement Monoids and adaptable Separated graphs}

\author{Pere Ara}
\address{Departament de Matem\`atiques, Universitat Aut\`onoma de Barcelona,
08193 Bellaterra (Barcelona), Spain, and Barcelona Graduate School of Mathematics (BGSMath).} \email{para@mat.uab.cat}
\author{Joan Bosa}
\address{Departament de Matem\`atiques, Universitat Aut\`onoma de Barcelona,
08193 Bellaterra (Barcelona), Spain, and Barcelona Graduate School of Mathematics (BGSMath).} \email{jbosa@mat.uab.cat}
\author{Enrique Pardo}
\address{Departamento de Matem\'aticas, Facultad de Ciencias, Universidad de C\'adiz,
Campus de Puerto Real, 11510 Puerto Real (C\'adiz),
Spain.}\email{enrique.pardo@uca.es}\urladdr{https://sites.google.com/a/gm.uca.es/enrique-pardo-s-home-page/}

\thanks{First, second and third authors are partially supported by the DGI-MINECO and European Regional Development Fund, jointly, through grants MTM2014-53644-P and MTM2017-83487-P. First and second author acknowledge support from the Spanish Ministry of Economy and Competitiveness, through the María de Maeztu Programme for Units of Excellence in R$\&$D (MDM-2014-0445). Third author was partially supported by PAI III grant FQM-298 of the Junta de Andaluc\'{\i}a.}
\subjclass[2010]{Primary 16D70, Secondary 06F20, 19K14, 20K20, 46L05, 46L55}
\keywords{Steinberg algebra, Refinement monoid, Type semigroup.}
\date{\today}
%

\begin{abstract}
We define a subclass of separated graphs, the class of {\it adaptable separated graphs}, and study their associated monoids. We show that these monoids are primely generated conical refinement monoids, 
and we explicitly determine their associated $I$-systems.  We also show that any finitely generated conical refinement monoid can be represented as the monoid of an adaptable separated graph.
These results provide the first step toward an affirmative answer to the Realization Problem for von Neumann regular rings,  in the finitely generated case.
\end{abstract}

\maketitle

\section*{Introduction.}\label{Sect:Intro}
The structure of commutative refinement monoids is generally very intricate, and it is difficult to rephrase their architecture in terms of combinatorial data.
These monoids appear naturally in different contexts, such as non-stable K-theory of exchange rings and real rank zero $C^*$-algebras (see e.g. \cite{AGOP, OPR}), classification of Boolean algebras
(see e.g. \cite{Ket, Pierce}), 
the realization problem for von Neumann regular rings (see below), and the theory of type semigroups (see e.g. \cite{RS,Weh}). In this paper, based on the work developed in \cite{AP16} and \cite{AP17}, 
we provide a concrete and useful description of a subclasss of all primely generated conical refinement monoids, which contains all the finitely generated ones, in terms of a 
specific type of separated graphs. 

Recall that a separated graph \cite{AG12} is a pair $(E,C)$, where $E$ is a directed graph and $C$ is a partition of the set of edges of $E$ which is finer than the partition induced by the source map
$s\colon E^1\to E^0$. Visually one may think of a separated graph as a directed graph where the edges have been given different colours. Several interesting algebras and $C^*$-algebras have been attached to these 
combinatorial objects, some of them having exotic behaviour (see for instance \cite{AE, AG12}). Given a separated graph $(E,C)$, one can naturally associate a monoid $M(E,C)$ to it \cite{AG12}. 
However, it is not always true that $M(E,C)$ is a refinement monoid \cite[Section 5]{AG12}. 

Generalizing earlier work by Dobbertin \cite{Dobb84} and Pierce \cite{Pierce}, the first and third-named authors have completely determined in \cite{AP16} the structure of primely generated conical refinement monoids. The main ingredient of 
this characterization is the notion of an $I$-system, which is a certain poset of semigroups generalizing the posets of groups used by Dobbertin in \cite{Dobb84} (see Definition \ref{def:I-system} below). 
Using this description, a characterization of the finitely generated conical refinement monoids which are isomorphic to a graph monoid $M(E)$ for a (non-separated) directed graph $E$ has been obtained in \cite{AP17}.
In particular, we stress the fact that {\it not} all such monoids are isomorphic to graph monoids. It is the purpose of this paper to show that a large class of primely generated conical refinement monoids, including all the 
finitely generated ones, can be obtained as monoids of the form $M(E,C)$ for $(E,C)$ belonging to a particularly well-behaved class of separated graphs, the {\it adaptable separated graphs} 
(see Definition \ref{def:adaptable-sepgraphs} below).

Concretely, the main result of this paper (Theorem \ref{thm:main}) is the following:

\begin{theorem*}
	The following two statements hold:
	\begin{enumerate}
		\item If $(E,C)$  is an adaptable separated graph, then $M(E,C)$ is a primely generated conical refinement monoid.
		\item For any finitely generated conical refinement monoid $M$, there exists an adaptable separated graph $(E,C)$ such that $M\cong M(E,C)$.  
	\end{enumerate}
\end{theorem*}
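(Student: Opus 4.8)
The plan is to prove both statements by passing through the classification of primely generated conical refinement monoids in terms of $I$-systems established in \cite{AP16}, used in both directions. For (1) I would read an $I$-system off the combinatorics of an adaptable separated graph and then identify $M(E,C)$ with the monoid that this $I$-system presents; for (2) I would start from the $I$-system of $M$ and build an adaptable graph whose associated $I$-system is the given one. The common bridge is the fact (recall Definition \ref{def:I-system}) that a conical refinement monoid is primely generated exactly when it is the monoid of an $I$-system, so that matching $I$-systems suffices to conclude.

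For (1), first note that $M(E,C)$ is conical and generated by the vertex classes $[v]$; this holds for every separated graph \cite{AG12}, so the real content is refinement and prime generation. Here the adaptability hypotheses (Definition \ref{def:adaptable-sepgraphs}) are exactly what drive the argument: they should organize $E^0$ into a well-founded poset of ``blocks,'' each block being either of \emph{free} type (contributing a copy of $\N$, i.e.\ a free prime) or of \emph{regular} type (where a cycle/loop configuration forces $v=2v$, yielding an idempotent prime that carries a group $G$ determined by the block). I would define an $I$-system $(I,\le,\{G_i\}_i,\dots)$ by taking $I$ to be this poset of blocks, assigning $\N$ or $G_i$ according to type, and defining the structural homomorphisms from the coloured edges crossing between blocks. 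The key step is to deduce the $I$-system axioms from the adaptability axioms, and then to exhibit a monoid isomorphism between $M(E,C)$ and the refinement monoid $M(I)$ built from the $I$-system, by comparing generators and relations. The main theorem of \cite{AP16} then yields that $M(E,C)$ is a primely generated conical refinement monoid.

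For (2), recall that every finitely generated conical refinement monoid is primely generated (Brookfield), hence by \cite{AP16} is the monoid of a finite $I$-system. I would realize this $I$-system graph-theoretically, block by block: each free prime is realized by a ``free gadget'' (a vertex with the appropriate out-configuration producing an $\N$-summand), and each regular prime with group $G$ by a ``regular gadget''—a separated loop/cycle configuration at a single vertex whose colour classes encode both the relation $v=2v$ and the group $G$. This is precisely where separation, unavailable in \cite{AP17}, becomes essential, since ordinary graph monoids cannot produce all such groups. The comparabilities and transfer homomorphisms of the $I$-system are then implemented by adding coloured edges from higher gadgets into lower ones. Finally I would check that the assembled pair $(E,C)$ satisfies each clause of Definition \ref{def:adaptable-sepgraphs}, and that its associated $I$-system (computed as in part (1)) coincides with the one we started from, so that $M(E,C)\cong M$ again by \cite{AP16}.

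The main obstacle is the encoding in part (2): faithfully representing the regular primes' groups—particularly torsion/finite cyclic groups and the transfer maps between comparable primes—by separated-graph relations, while simultaneously arranging that the graph is adaptable and that the added separation introduces neither spurious relations nor a failure of refinement, so that the computed monoid is \emph{exactly} $M$ rather than a quotient or extension of it. Correspondingly, in part (1) the delicate point is verifying that the adaptability axioms are strong enough to force the full $I$-system axioms and the claimed isomorphism $M(E,C)\cong M(I)$, and not merely a surjection.
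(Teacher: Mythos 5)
Your high-level skeleton (use the $I$-system classification of \cite{AP16} as the bridge in both directions, and assemble the graph prime-by-prime for (2)) matches the paper's, but two of your concrete mechanisms are wrong or missing, and they are exactly the points where the paper does its real work.

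First, in part (2) you have the roles of free and regular primes reversed. In an adaptable separated graph (Definition \ref{def:adaptable-sepgraphs}) the regular primes are realized by ordinary transitive row-finite graphs with \emph{trivial} separation ($|C_w|=1$ at every vertex), exactly as in \cite{AP17}; realizing the groups of regular primes is not where ordinary graph monoids fail, and there is no ``separated single-vertex regular gadget'' in this class at all. The obstruction identified in \cite{AP17} sits at the \emph{free} primes: for a graph monoid the map $G(\varphi_p)\colon G(M_{J_p})\to G_p$ must be an almost isomorphism. Separation is introduced precisely at the single vertex $v^p$ of a non-minimal free prime: the colour classes $X^{(p)}_1,\dots,X^{(p)}_{k(p)}$, each consisting of one loop plus connectors, impose the relations $v^p=v^p+\widehat{x}_i$ in $M(E_{J'},C^{J'})$, thereby killing prescribed elements of $\ker G(\varphi_p)$. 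For this encoding to be possible one needs the paper's Proposition \ref{prop:generation-of-kernel}: that kernel is generated by finitely many \emph{strictly positive} elements, since only non-negative combinations of vertices can be written as ranges of edges. Your proposal contains neither this positivity statement nor any mechanism for killing the kernel, so the inductive step for free primes cannot be carried out as you describe.

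Second, in part (1) your plan --- define an $I$-system from the blocks, prove $M(E,C)\cong M(\mathcal J)$ by ``comparing generators and relations,'' then invoke \cite{AP16} --- is logically sound but leaves untouched the only hard step: injectivity of the comparison map, i.e., that the separated-graph relations collapse no more than the $I$-system relations. You flag this yourself as ``the delicate point'' but offer no tool for it. The paper does not argue this way: it first proves refinement and prime generation \emph{directly}, via a confluence property of the rewriting relation on the free commutative monoid on $E^0$ (Lemma \ref{lem:confluence}, adapting \cite{AMFP}), where adaptability enters through the fact that every $X\in C$ contains a loop, so a vertex never leaves the support under rewriting; this immediately gives that each $a_v$ is prime. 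Only afterwards does it compute the $I$-system (Proposition \ref{prop:MEC-theIsystem}), and that identification again uses the confluence lemma to get injectivity of the maps $\lambda_p$. Without something playing the role of confluence --- a normal-form control on the congruence defining $M(E,C)$ --- neither your route for (1) nor the final verification in your (2) (that the constructed graph's monoid is exactly $M$ and not a proper quotient) can be completed.
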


We now outline some of the applications of the results obtained in this note. Concretely, we use the structure of an adaptable separated graph in order to get two realization results. 
The first application is given in \cite{ABPS}, where the authors, jointly with A. Sims, attach to each adaptable separated graph $(E,C)$ an $E^*$-unitary inverse semigroup $S(E,C)$.
Moreover, using techniques developed by Paterson \cite{Paterson} and Exel \cite{ExelBraz}, they build from this inverse semigroup $S(E,C)$ an
ample Hausdorff \'etale topological groupoid $\mathcal G (E,C)$ satisfying
$$\Typ (\mathcal G (E,C)) \cong M(E,C).$$
In particular, we see from Theorem \ref{thm:main}(2) that all finitely generated conical refinement monoids arise as type semigroups of this well-behaved class of topological groupoids.
The second application concerns the {\it Realization Problem for von Neumann regular rings}, posed by Goodearl in \cite{directsum}. 
This wonders which refinement monoids appear as a $\mathcal V(R)$ for a von Neumann regular ring $R$, where the latter stands for the monoid of isomorphism classes of finitely generated 
projective (left, say) $R$-modules, with the operation induced from direct 
sum (see \cite{Areal} for a survey on this problem).
For an adaptable separated graph $(E,C)$ and an arbitrary field $K$, we build in \cite{ABP} a von Neumann regular $K$-algebra $Q_K(E,C)$, which is a certain universal localization 
of the Steinberg algebra $A_K(\mathcal G(E,C))$ of the above groupoid $\mathcal G(E,C)$, and which satisfies that
$$\mathcal V (Q_K(E,C)) \cong M(E,C).$$
Again, Theorem \ref{thm:main}(2) gives that the realization problem for von Neumann regular $K$-algebras has a positive answer for any finitely generated conical refinement monoid. 
This construction extends at once the constructions given in \cite{Ara10} and \cite{AB}.

 The paper is organized as follows. 
 In the first section we introduce background material needed for our results. We have splitted this in three subsections, concerning commutative monoids, primely generated refinement monoids, and
 separated graphs, respectively. In Section 2, we prove our results. We have divided this section into two subsections, in each of which we prove one of the statements of our
 Theorem. 

\section{Preliminaries}\label{sec:Preliminaries}
\subsection{Basics on commutative monoids.}
All semigroups and monoids considered in this paper are commutative. We will denote by $\N$ the semigroup of positive integers, and by $\Z^+$ the monoid of non-negative integers.  

Given a commutative monoid $M$, we set $M^*:=M\setminus\{0\}$. We say that $M$ is {\it conical} if $M^*$ is a semigroup, that is, if, for all $x$, $y$ in $M$, $x+y=0$ only when $x=y=0$. 

We say that a monoid $M$ is {\it separative} 
provided $2x=2y=x+y$ always implies $x=y$; there are a number of equivalent formulations of this property, see e.g. \cite[Lemma 2.1]{AGOP}.
We say that  $M$ is
a {\it refinement monoid} if, for all $a$, $b$, $c$, $d$ in
$M$ such that $a+b=c+d$, there exist $w$, $x$, $y$, $z$ in $M$ such
that $a=w+x$, $b=y+z$, 
$c=w+y$ and $d=x+z$.  
A basic example of refinement monoid is the monoid $M(E)$ associated to a countable row-finite graph $E$ \cite[Proposition 4.4]{AMFP}.

If $x, y\in M$, we write $x\leq y$ 
if there exists $z\in M$  such that $x+z = y$.
Note that $\le$ is a translation-invariant pre-order on $M$, called the {\it algebraic pre-order} of $M$. All inequalities in commutative monoids will be with respect to this pre-order. 
An element $p$ in a monoid $M$ is a {\it prime element} if $p$ is not invertible in $M$, and, whenever 
$p\leq a+b$ for $a,b\in M$, then either $p\leq a$ or $p\leq b$. The monoid $M$ is {\it primely generated} if every non-invertible element of $M$ 
can be written as a sum of prime elements.
 
An element $x\in M$ is {\it regular} if $2x\leq x$. An element $x\in M$ is an {\it idempotent} if $2x= x$. An element 
$x\in M$ is {\it free} if $nx\leq mx$ implies $n\leq m$. Any element of a separative monoid is either free or regular.
In particular, this is the case for any primely generated refinement monoid, by \cite[Theorem 4.5]{Brook}. Furthermore, 
every finitely generated refinement monoid is primely generated \cite[Corollary 6.8]{Brook}. 

A subset $S$ of a monoid $M$ is called an {\it order-ideal} if $S$ is a subset of $M$ containing $0$,
closed under taking sums and summands within $M$.  An order-ideal can also be described as a submonoid $I$ of
$M$, which is hereditary with respect to the canonical pre-order
$\le $ on $M$: $x\le y$ and $y\in I$ imply $x\in I$. A non-trivial monoid is said to be {\it simple} if it has no non-trivial order-ideals.

If $(S_k)_{k\in \Lambda}$ is a family of (commutative) semigroups, $\bigoplus _{k\in \Lambda} S_k$ (resp. $\prod _{k\in \Lambda} S_k$) 
stands for the coproduct (resp. the product) of the semigroups $S_k$, $k\in \Lambda$, in the category of commutative
semigroups. If the semigroups $S_k$ are subsemigroups of a semigroup $S$, we will denote by $\sum_{k\in \Lambda} S_k$ the subsemigroup of $S$ generated by $\bigcup_{k\in \Lambda}S_k$.
Note that $\sum_{k\in \Lambda} S_k$ is the image of the canonical map $\bigoplus_{k\in \Lambda} S_k\to S$. We will use the notation $\langle X\rangle $ to denote the semigroup
generated by a subset $X$ of a semigroup $S$.

Given a semigroup $M$, we will denote by $G(M)$ the Grothendieck group of $M$. There exists a semigroup homomorphism $\psi_M\colon M\to G(M)$
such that for any semigroup homomorphism $\eta \colon M\to H$ to a group $H$ there is a unique group homomorphism $\widetilde{\eta}\colon G(M)\to H$ such that 
$\widetilde{\eta}\circ \psi_M= \eta$. $G(M)$ is abelian and it is generated as a group by $\psi (M)$.  If $M$ is already a group then $G(M)= M$. If $M$ is a semigroup of the form 
$\N\times G$, where $G$ is an abelian group, then $G(M)= \Z\times G$. In this case, we will view $G$ as a subgroup of $\Z\times G$ by means of the identification $g\leftrightarrow (0,g)$. 

Let $M$ be a conical commutative monoid, and let $x\in M$ be any element. The {\it archimedean component} of $M$ generated by $x$ is the subsemigroup
$$G_M[x]:=\{a\in M : a\leq nx \text{ and } x\leq ma \text{ for some } n,m\in \N\}.$$

For any $x\in M$, $G_M[x]$ is a simple semigroup. If $M$ is separative, then $G_M[x]$ is a cancellative semigroup; if moreover $x$ is a regular element, 
then $G_M[x]$ is an abelian group.


\subsection{Primely generated refinement monoids}
\label{subsec:Primely-gen}

The structure of primely generated refinement monoids has been recently described in \cite{AP16}. We recall here some basic facts.

Given a poset $(I, \leq)$, we say that a subset $A$ of $I$ is a {\it lower set} if $x\leq y$ in $I$ and $y\in A$ implies $x\in A$. For any $i\in I$, 
we will denote by $I\downarrow i=\{x\in I : x\leq i\}$ the lower subset generated by $i$. 
We will write $x<y$ if $x\le y$ and $x\ne y$. 

The following definition is crucial for this work:

\begin{definition}[{\cite[Definition 1.1]{AP16}}]
\label{def:I-system} {\rm Let $I= (I,\le )$ be a poset.  An {\it $I$-system} $$\mathcal{J}=
\left(I, \leq , (G_i)_{i\in I}, \varphi_{ji} \, (i<j)\right)$$ is given by the following data:
\begin{enumerate}
\item[(a)] A partition
$I=I_{free}\sqcup I_{reg}$ (we admit one of the two sets
$I_{free}$ or $I_{reg}$ to be empty).
\item[(b)] A family $\{G_i\}_{i\in I}$ of abelian groups. We adopt the following notation: 
\begin{itemize}
\item[(1)] For $i\in I_{reg}$, set $M_i = G_i$, and $\widehat{G}_i=G_i=M_i$.
\item[(2)] For $i\in I_{free}$, set $M_i=\N \times G_i$, and $\widehat{G}_i= \Z\times G_i$
\end{itemize}
Observe that, in any case, $\widehat{G}_i$ is the Grothendieck group of $M_i$.
\item[(c)]
A family of  semigroup homomorphisms $\varphi _{ji}\colon M_i\to
G_j$ for all $i<j$, to which we associate, for all $i<j$, the unique extension $\widehat{\varphi}_{ji}\colon \widehat{G}_i \to G_j$ of $\varphi _{ji}$ to a group homomorphism
from the Grothendieck group of $M_i$  to $G_j$ (we look at these
maps as maps from $\widehat{G}_i$ to $\widehat{G}_j$). We require that the family $ \{ \varphi_{ji} \}$ satisfies the following conditions:
\begin{itemize}
\item[(1)]  The assignment
$$
\left\{
\begin{array}{ccc}
i & \mapsto  &  \widehat{G}_i   \\
 (i<j) & \mapsto  &   \widehat{\varphi}_{ji}
\end{array}
\right\}
$$ 
defines a functor from the
category $I$ to the category of abelian groups (where we set $\widehat{\varphi}_{ii}= {\rm  id}_{\widehat{G}_i}$
for all $i\in I$).
\item[(2)]  For each $i\in I_{free}$ we have that the map
$$\bigoplus _{k<i} \varphi _{ik}\colon  \bigoplus _{k<i}  M_k \to G_i$$
is surjective.
\end{itemize}
\end{enumerate}
We say that an $I$-system $\mathcal  J =  \left(I, \leq , (G_i)_{i\in I}, \varphi_{ji}\, (i<j)\right)$ is {\it finitely generated} 
in case $I$ is a finite poset and all the groups $G_i$ are finitely generated.} 
\end{definition}

To every $I$-system $\mathcal J$ one can associate a primely generated conical refinement monoid $M(\mathcal J)$, and conversely to any primely generated conical refinement monoid $M$, we can associate
an $I$-system $\mathcal J$ such that $M\cong M(\mathcal J)$, see Sections 1 and 2 of \cite{AP16} respectively.


\subsection{Separated graphs}

Here, we recall definitions and properties about separated graphs that will be needed in the sequel. In particular, we define the notion of {\it adaptable} separated graph, 
which is crucial for this paper. We refer the reader to \cite{AAS} and \cite{AG12} for more information and general notation about (separated) graphs.

Let $E$ be a directed graph,
and let $\le $ be the preorder on $E^0$ determined by $w\ge v$ if
there is a path in $E$ from $w$ to $v$. Let $I$ be the
antisymmetrization of $E^0$, with the partial order $\le $ induced by
the order on $E^0$. Thus, denoting by $[v]$ the class of $v\in E^0$
in $I$, we have $[v]\le [w]$ if and only if $v\le w$.

For $v\in E^0$, we refer to the set $[v]$ as the component
of $v$, and we will denote by $E[v]$ the restriction of $E$ to $[v]$, that is, the graph with
$E[v]^0= [v]$ and $E[v]^1= \{e\in E^1\mid s(e)\in [v] \text{ and } r(e)\in [v] \}$.
If $J$ is a lower subset of $I$, we will denote by $E|_J$ the restriction of the graph $E$ to the set
of vertices $\{ v\in E^0 \mid [v]\in J \}$.

We now describe our graphs.

\begin{definition}[{\cite[Definition 2.1]{AG12}}]\label{defsepgraph}
{\rm A \emph{separated graph} is a pair $(E,C)$ where $E$ is a directed graph,  $C=\bigsqcup
_{v\in E^ 0} C_v$, and
$C_v$ is a partition of $s^{-1}(v)$ (into pairwise disjoint nonempty
subsets) for every vertex $v$. (In case $v$ is a sink, we take $C_v$
to be the empty family of subsets of $s^{-1}(v)$).

If all the sets in $C$ are finite, we shall say that $(E,C)$ is a \emph{finitely separated} graph.}
\end{definition}

From now on, we will assume that all our separated graphs are finitely separated graphs without any further comment.

Following \cite{AG12}, we associate the following monoid to any finitely separated graph.

\begin{definition}[{\cite[Definition 4.1]{AG12}}]\label{defsepgraphmonoid}
Given a finitely separated graph $(E,C)$, we define the monoid of the separated graph $(E,C)$, to be 
\begin{equation}\label{(M)}
M(E,C)=\Big{\langle} a_v \,\, \, (v\in E^0) \, : a_v=\sum _{\{ e\in X\}}a_{r(e)} \text{ for every } X\in C_v, v\in E^0\Big{\rangle} .\end{equation}
\end{definition}

Recall that a directed graph is said to be {\it transitive} if any two vertices can be connected by a finite directed path. 

\begin{definition}
    \label{def:adaptable-sepgraphs}
    {\rm Let $(E,C)$ be a finitely separated graph and let $(I,\le )$ be the antisymmetrization of $(E^0,\leq)$. We say that $(E,C)$ is {\it adaptable} if $I$ is finite,
    and there exist a partition $I=\Ifree \sqcup \Ireg $, and a family of subgraphs $\{ E_p \}_{p\in I}$ of $E$ such that the following conditions are satisfied:
        \begin{enumerate}
            \item $E^0=\bigsqcup_{p\in I} E_p^0$, where $E_p$ is a transitive row-finite graph if $p\in \Ireg$ and $E_p^0= \{ v^p \}$ is a single vertex if $p\in \Ifree $.
            \item For $p\in \Ireg$ and $w\in E_p^0$, we have that $|C_w|= 1$ and $|s_{E_p}^{-1} (w)|\ge 2$. Moreover, all edges departing from $w$ either belong to the graph $E_p$ or connect $w$ to a vertex $u\in E_q^0$,
            with $q<p$ in $I$.
            \item For $p\in \Ifree$, we have that $s^{-1}(v^p) = \emptyset $ if and only if $p$ is minimal in $I$. If $p$ is not minimal, then there is a positive integer $k(p)$ such that
            $C_{v^p}=\{ X^{(p)}_1,\dots ,X^{(p)}_{k(p)} \}$. Moreover, each $X^{(p)}_i$ is of the form
            $$X^{(p)}_i = \{ \alpha (p,i) ,\beta (p,i,1),\beta (p,i,2),\dots , \beta(p, i, g(p,i)) \} ,$$
            for some $g(p,i) \ge 1$, where $\alpha (p,i)$ is a loop, i.e., $s(\alpha(p,i))= r(\alpha (p,i)) = v^p$, and $r(\beta (p,i,t))\in E^0_q$ for $q<p$ in $I$.
            Finally, we have $E_p^1= \{ \alpha(p,1),\dots ,\alpha (p,k(p)) \}$.
        \end{enumerate}

        The edges connecting a vertex $v\in E_p^0$ to a vertex $w\in E_q^0$ with $q<p$ in $I$ will be called {\it connectors}.} \qed
\end{definition}

\section{Adaptable separated graphs and their associated monoids.}\label{Section1}
In this section we show the main result of the paper:

\begin{theorem}\label{thm:main} 
	The following two statements hold:
\begin{enumerate}
\item If $(E,C)$  is an adaptable separated graph, then $M(E,C)$ is a primely generated conical refinement monoid.
\item For any finitely generated conical refinement monoid $M$, there exists an adaptable separated graph $(E,C)$ such that $M\cong M(E,C)$. 
\end{enumerate}
	\end{theorem}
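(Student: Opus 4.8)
The plan is to prove the two statements separately, exploiting the structure theory of primely generated conical refinement monoids via $I$-systems recalled above. For part (1), I would begin by analyzing the structure of $M(E,C)$ component-wise along the poset $I$. The key observation is that the defining relations of $M(E,C)$ respect the stratification $E^0=\bigsqcup_{p\in I}E_p^0$: for a vertex $w\in E_p^0$, the relation $a_w=\sum_{e\in X}a_{r(e)}$ expresses $a_w$ in terms of generators $a_v$ with $v$ in the same component $E_p$ (for the loop/internal edges) or in strictly lower components $E_q$ with $q<p$ (via the connectors). This suggests building a candidate $I$-system $\mathcal J$ attached to $(E,C)$ and showing $M(E,C)\cong M(\mathcal J)$. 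Concretely, for each $p\in I$ I would define a group $G_p$ using the Grothendieck group of the archimedean data at level $p$: for $p\in\Ireg$ the transitive graph $E_p$ with $|C_w|=1$ and $|s_{E_p}^{-1}(w)|\ge 2$ forces the local monoid to collapse to an abelian group $G_p$ (the condition $2a_w\le a_w$ type behaviour making the component regular), whereas for $p\in\Ifree$ the single vertex $v^p$ with its loops contributes a free part, so the local monoid is $\N\times G_p$. The connectors then induce the transition homomorphisms $\varphi_{ji}\colon M_i\to G_j$.

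The main work in part (1) is verifying that these transition maps satisfy the functoriality condition (c)(1) and the surjectivity condition (c)(2) of Definition \ref{def:I-system}, and that the monoid $M(\mathcal J)$ reconstructed from this $I$-system is genuinely isomorphic to $M(E,C)$. For the functoriality, I would trace how composing connectors along a chain $i<j<\ell$ corresponds to concatenating directed paths in $E$, using that the preorder on $E^0$ is exactly path-connectivity; the relations in $M(E,C)$ then guarantee the cocycle condition $\widehat\varphi_{\ell i}=\widehat\varphi_{\ell j}\circ\widehat\varphi_{ji}$. For condition (c)(2), I would use that for $p\in\Ifree$ each $X^{(p)}_i$ contains connectors $\beta(p,i,t)$ landing in strictly lower components, and that these must generate $G_p$ once the loop $\alpha(p,i)$ is accounted for. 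Granting the isomorphism $M(E,C)\cong M(\mathcal J)$, part (1) follows immediately since \cite{AP16} guarantees $M(\mathcal J)$ is a primely generated conical refinement monoid; conicality can also be checked directly from the form of the relations.

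For part (2), the strategy is the reverse: starting from a finitely generated conical refinement monoid $M$, invoke \cite[Corollary 6.8]{Brook} to see it is primely generated, then apply the converse direction of \cite{AP16} to obtain a finitely generated $I$-system $\mathcal J$ with $M\cong M(\mathcal J)$. The task is then to realize $\mathcal J$ as the $I$-system of an adaptable separated graph, i.e.\ to reverse the construction of part (1). For each $i\in\Ireg$ I would build a finite transitive row-finite graph $E_i$ whose local monoid is the prescribed group $G_i$, realizing a finitely generated abelian group as an archimedean component of a graph monoid (this is exactly where the flexibility of separated graphs over ordinary graphs is needed, and I expect to draw on the constructions in \cite{AP17}). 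For each $i\in\Ifree$ I would place a single vertex $v^i$ with $k(i)$ loops and attach connector edges $\beta(i,\cdot,\cdot)$ into lower components so as to encode the maps $\varphi_{ij}$; the surjectivity condition (c)(2) is precisely what makes such a connector system implementable.

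The hard part will be part (2): building, for an \emph{arbitrary} finitely generated abelian group $G_i$ and prescribed homomorphisms $\varphi_{ji}$, explicit finite graphs and connector patterns that reproduce exactly the required group data and transition maps while keeping all of the rigid combinatorial constraints of Definition \ref{def:adaptable-sepgraphs} (single color classes of size $\ge 2$ in regular components, loops plus lower-landing connectors in free components) simultaneously satisfied. Realizing the torsion and free parts of each $G_i$ and matching the connecting homomorphisms on the nose — rather than merely up to isomorphism after the fact — is the delicate bookkeeping, and I would organize it by induction on the height of the poset $I$, constructing the components from the minimal elements upward so that the connectors always target already-built lower strata.
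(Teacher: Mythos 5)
Your plan reverses the paper's logical order, and in doing so it leaves the hardest step of part (1) unsupported. The paper does \emph{not} first build an $I$-system and then deduce refinement, conicality and prime generation from \cite{AP16}; it proves refinement and prime generation directly, by a rewriting (``confluence'') argument on the free commutative monoid $F$ on $E^0$ (Lemma \ref{lem:confluence}, adapting \cite[Lemma 4.3]{AMFP}, where the new difficulty is that at a free vertex two different colour sets $X\neq Y\in C_{v^p}$ may be used, and is resolved using the loop contained in each colour set), and only afterwards invokes \cite[Theorem 2.7]{AP16} to know that \emph{some} $\mathbb{P}$-system exists, which it then identifies explicitly (Proposition \ref{prop:MEC-theIsystem}). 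Your route instead requires proving $M(E,C)\cong M(\mathcal J)$ for a candidate system $\mathcal J$, and the delicate half of that isomorphism is injectivity: showing that the comparison map does not collapse $M(E,C)$, i.e., that elements of $F$ which become equal in $M(\mathcal J)$ were already congruent under the graph relations. You offer no tool for controlling the congruence $\sim$ on $F$; the items you flag as ``the main work'' (conditions (c)(1) and (c)(2) of Definition \ref{def:I-system}) are routine bookkeeping, whereas in the paper even the identification of the system --- specifically the injectivity of the maps $\lambda_p$ in Proposition \ref{prop:MEC-theIsystem} --- is proved by running the confluence lemma. Without a confluence-type statement (or an equivalent normal-form result for $\sim$), your part (1) cannot close; with it, the paper gets refinement and primality without ever needing the a priori isomorphism you aim for.

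For part (2) your induction up the poset is indeed the paper's strategy (Theorem \ref{thm:realM-by-sepgraph}, following \cite[Proposition 5.13]{AP17}: add one minimal element of $I\setminus J$ at a time, defer regular and minimal primes to \cite{AP17}), but the step you justify by appealing to condition (c)(2) is wrongly attributed, and the lemma actually needed is absent. At a free vertex $v^p$ the only relations an adaptable separated graph can impose are $v^p=v^p+\widehat{x}_i$, where $\widehat{x}_i$ is a \emph{nonnegative} integer combination of vertices in strictly lower components (each colour set is a loop plus connectors). Hence the subgroup of $G(M_{J_p})$ that must be killed to produce $G_p$, namely $\ker G(\varphi_p)$, has to be generated by \emph{strictly positive} elements, i.e., by images of honest elements of $M_{J_p}$. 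This is exactly Proposition \ref{prop:generation-of-kernel} of the paper, proved by adding a common correction term $z\in M_{J_p}$ to the two halves of a difference; condition (c)(2) only yields surjectivity of $\varphi_p$, which is an ingredient of that proof but nowhere near sufficient. Finally, note that verifying that the enlarged graph $(E_{J'},C^{J'})$ has the correct monoid at each inductive step itself uses part (1) and Proposition \ref{prop:MEC-theIsystem} (via the commutative diagram comparing $G(\varphi_p)$ with $G(\phi_p)$ and an explicitly constructed inverse of $\gamma_{J'}$), so the two halves of the theorem are more tightly coupled than your plan acknowledges.
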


 We have divided the proof in two parts. First we show statement {\rm (1)} (Proposition \ref{prop:MECisprimelygenerated}), and, subsequently, we show the realization result stated in {\rm (2)} (Theorem \ref{thm:realM-by-sepgraph}).


\subsection{The monoid of an adaptable separated graph}

We show below that the monoid $M(E,C)$ associated to an adaptable separated graph $(E,C)$ is a primely generated conical refinement monoid. 
As a consequence, we obtain from \cite[Theorem 2.7]{AP16} that there is a poset $\mathbb P$, with a partition $\mathbb P = \mathbb P_{{\rm free}} \sqcup \mathbb P_{{\rm reg}}$, and a $\mathbb P$-system 
$\mathcal J$ such that $M(E,C)\cong M(\mathcal J)$. We will explicitly determine this system. 

To show our results, we will need the ``confluence'' property of the congruence associated to our separated graphs $(E,C)$. This was established for all graph monoids $M(E)$ of ordinary
row-finite graphs in \cite[Lemma 4.3]{AMFP}. Amongst other things, this enables us to show the refinement property of the monoids $M(E,C)$, when $(E,C)$ is an adaptable separated graph.

Let $(E,C)$ be an adaptable separated graph, and $F$ be the free commutative monoid on the set $E^0$. The nonzero
elements of $F$ can be written in a unique form up to permutation
as $\sum _{i=1}^n v_i$, where $v_i\in E^0$. Now we will give a
description of the congruence on $F$ generated by the relations
(\ref{(M)}) (see Definition \ref{defsepgraphmonoid}) on $F$.

 It will be convenient to introduce the
following notation. For $X\in C_v$ ($v\in E^0$), write
$${\bf r}(X):=\sum _{e\in X} r(e)\in F .$$
With this new notation, the relations in (\ref{(M)}) become $v={\bf r}(X)$ 
for every $v\in E^0$ and every $X\in C_v$.

\begin{definition}
	\label{def:binary} {\rm Define a binary relation $\rightarrow_1$ on
		$F\setminus \{0\}$ as follows. Let $\sum _{i=1}^n v_i\in F\setminus \{ 0 \}$, and let $X\in C_{v_j}$ for some 
		$j\in \{ 1,2,\dots ,n \}$. Then $\sum _{i=1}^n v_i\rightarrow_1
		\sum _{i\ne j}v_i+{\bf r}(X)$. Let $\rightarrow $ be the
		transitive and reflexive closure of $\rightarrow _1$ on
		$F\setminus \{0\}$, that is, $\alpha\rightarrow \beta$ if and only
		if there is a finite string $\alpha =\alpha _0\rightarrow _1
		\alpha _1\rightarrow _1 \cdots \rightarrow _1 \alpha _t=\beta.$
		
		Let $\sim$ be the congruence on $F$ generated by the relation
		$\rightarrow_1 $ (or, equivalently, by the relation $\rightarrow
		$). Namely $\alpha\sim \alpha$ for all $\alpha \in F$ and, for
		$\alpha,\beta \ne 0$, we have $\alpha\sim \beta$ if and only if
		there is a finite string $\alpha =\alpha _0,\alpha_1, \dots
		,\alpha _n=\beta$, such that, for each $i=0,\dots ,n-1$, either
		$\alpha _i\rightarrow _1 \alpha _{i+1}$ or
		$\alpha_{i+1}\rightarrow_1 \alpha _i$. The number $n$ above will
		be called the {\it length} of the string.}\qed
\end{definition}

It is clear that $\sim $ is the congruence on $F$ generated by
relations (\ref{(M)}), and so $M(E,C) = F/{\sim}$.

The {\em support} of an element $\gamma$ in $F$, denoted
$\mbox{supp}(\gamma)\subseteq E^0$, is the set of basis elements
appearing in the canonical expression of $\gamma$.

The proof of the following easy lemma is similar to the one of \cite[Lemma 4.2]{AMFP}.

\begin{lemma} (cf. \cite[Lemma 4.2]{AMFP}\label{division})
	\label{lem:division} Let $(E,C)$ be any finitely separated graph. 
	Let $\rightarrow $ be the binary relation on $F$
	defined above and $\alpha,\beta\in F\setminus\{0\}$. Assume that $\alpha =\alpha _1+\alpha _2$ and
	$\alpha \rightarrow \beta $. Then $\beta $ can be written as
	$\beta =\beta _1+\beta _2$, with $\alpha _1\rightarrow \beta _1$
	and $\alpha_2\rightarrow \beta _2$.
\end{lemma}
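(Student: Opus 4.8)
The plan is to induct on the length $t$ of a reduction chain realizing $\alpha \rightarrow \beta$, thereby reducing everything to the single-step case $\alpha \rightarrow_1 \beta$. Write $\alpha =\gamma_0 \rightarrow_1 \gamma_1 \rightarrow_1 \cdots \rightarrow_1 \gamma_t = \beta$. When $t=0$ we have $\alpha=\beta$ and may simply take $\beta_i=\alpha_i$, using reflexivity of $\rightarrow$ (and the convention $0\rightarrow 0$ in case some $\alpha_i$ vanishes). For the inductive step I would first apply the single-step case to $\alpha=\alpha_1+\alpha_2\rightarrow_1\gamma_1$ to obtain a decomposition $\gamma_1=\delta_1+\delta_2$ with $\alpha_1\rightarrow\delta_1$ and $\alpha_2\rightarrow\delta_2$; then the inductive hypothesis applied to $\gamma_1=\delta_1+\delta_2$ together with the length-$(t-1)$ chain $\gamma_1\rightarrow\beta$ yields $\beta=\beta_1+\beta_2$ with $\delta_i\rightarrow\beta_i$. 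Transitivity of $\rightarrow$ then gives $\alpha_i\rightarrow\delta_i\rightarrow\beta_i$, as desired.

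The heart of the matter is therefore the single-step case, and here the argument is pure bookkeeping in the free commutative monoid $F$. A step $\alpha\rightarrow_1\beta$ is determined by a choice of one occurrence of a vertex $v_j$ in the canonical expression $\alpha=\sum_{i=1}^n v_i$ together with a set $X\in C_{v_j}$, and it replaces that single copy of $v_j$ by ${\bf r}(X)=\sum_{e\in X}r(e)$, so that $\beta=\sum_{i\ne j}v_i+{\bf r}(X)$. Since $\alpha=\alpha_1+\alpha_2$, the multiplicity of $v_j$ in $\alpha$ is the sum of its multiplicities in $\alpha_1$ and $\alpha_2$, so the rewritten copy of $v_j$ belongs to at least one of the two summands. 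Assuming without loss of generality that $v_j$ occurs in $\alpha_1$, I would write $\alpha_1=v_j+\alpha_1'$ and set $\beta_1:=\alpha_1'+{\bf r}(X)$ and $\beta_2:=\alpha_2$. Then $\alpha_1\rightarrow_1\beta_1$ (hence $\alpha_1\rightarrow\beta_1$), while $\alpha_2=\beta_2$ gives $\alpha_2\rightarrow\beta_2$ by reflexivity, and $\beta_1+\beta_2=\alpha_1'+{\bf r}(X)+\alpha_2=\beta$ by commutativity of $F$.

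I do not expect any genuine obstacle here: the lemma is essentially the observation that a one-vertex rewrite is \emph{local}, and so can be carried out inside whichever summand contains the rewritten vertex. The only points requiring care are the trivial edge cases where $\alpha_1$ or $\alpha_2$ equals $0$ (handled by the convention $0\rightarrow 0$, or by assigning the zero decomposition to the zero summand) and the correct threading of reflexivity and transitivity through the induction. This mirrors the proof of \cite[Lemma 4.2]{AMFP} almost verbatim, the separated structure $C$ playing no special role beyond supplying the sets $X$ that label the rewrites.
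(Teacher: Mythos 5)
Your proof is correct and follows exactly the standard argument that the paper itself invokes by citation: the paper gives no proof beyond stating that it is ``similar to the one of \cite[Lemma 4.2]{AMFP}'', and your induction on the length of the $\rightarrow_1$-chain, with the single-step case resolved by locating the rewritten occurrence of $v_j$ in one of the two summands and rewriting locally, is precisely that argument. The edge cases (zero summands, the convention $0\rightarrow 0$) are handled appropriately, so there is nothing to add.
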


We are now ready to obtain the crucial lemma that gives the important ``confluence" property of
the congruence $\sim$ on the free commutative monoid $F$.

\begin{lemma}
	\label{lem:confluence} Let $(E,C)$ be an adaptable separated graph. 
	Let $\alpha$ and $\beta$ be nonzero elements in
	$F$. Then $\alpha \sim \beta$ if and only if there is $\gamma\in
	F$ such that $\alpha \rightarrow \gamma$ and $\beta \rightarrow
	\gamma$.
\end{lemma}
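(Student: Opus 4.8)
The plan is to establish the nontrivial (``only if'') direction, since the converse is immediate: if $\alpha\to\gamma$ and $\beta\to\gamma$ then $\alpha\sim\gamma\sim\beta$ by definition of the congruence. For the forward direction, I would reduce to a local confluence statement and then propagate it along the zigzag string that witnesses $\alpha\sim\beta$. This is the standard Newman-type diamond argument used for ordinary graph monoids in \cite[Lemma 4.3]{AMFP}, and the whole point is to check that the \emph{adaptability hypotheses} supply exactly the structure needed to make the diamonds close.

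\medskip

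\noindent\textbf{Step 1 (Local confluence).} The key lemma to isolate is: if $\delta\to_1\epsilon_1$ and $\delta\to_1\epsilon_2$ by rewriting (possibly the same) vertex-occurrences using sets $X\in C_{v}$ and $Y\in C_{w}$, then there exists $\zeta\in F$ with $\epsilon_1\to\zeta$ and $\epsilon_2\to\zeta$. When the two rewrites act at disjoint occurrences of $\delta$, they commute trivially and one applies the other move to each side. The genuinely delicate case is when both moves are applied at the \emph{same} vertex occurrence $v$, using two possibly different sets $X,Y\in C_v$. Here I would invoke Definition \ref{def:adaptable-sepgraphs}: for $p\in\Ireg$ and $w\in E_p^0$ we have $|C_w|=1$, so there is only one admissible set and no ambiguity arises at regular vertices at all. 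For $p\in\Ifree$ with $v=v^p$ non-minimal, the distinct sets $X^{(p)}_i$ each contain the loop $\alpha(p,i)$ (with $r=s=v^p$) plus connectors $\beta(p,i,t)$ pointing strictly downward to components $q<p$. The claim is that after rewriting $v^p$ by $X^{(p)}_i$ the resulting term still contains a copy of $v^p$ (coming from the loop), so one can subsequently rewrite it by $X^{(p)}_j$; carrying this out from both $\epsilon_1$ and $\epsilon_2$ produces a common descendant $\zeta$ consisting of $v^p$ together with the $\mathbf r$-expansions of all the connectors involved.

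\medskip

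\noindent\textbf{Step 2 (Globalization / confluence of $\to$).} With local confluence in hand, and using Lemma \ref{lem:division} (which lets me split a reduction $\alpha=\alpha_1+\alpha_2\to\beta$ compatibly as $\alpha_i\to\beta_i$), I would upgrade to full confluence of the relation $\to$: if $\alpha\to\mu$ and $\alpha\to\nu$ then there is $\gamma$ with $\mu\to\gamma$ and $\nu\to\gamma$. This is the usual induction on the combined lengths of the two reduction sequences, tiling the region between them with the elementary diamonds from Step 1 and using Lemma \ref{lem:division} to transport moves across sums. Then the ``only if'' direction follows by induction on the length $n$ of the zigzag string $\alpha=\alpha_0,\dots,\alpha_n=\beta$: at each step where the arrow reverses, confluence of $\to$ lets me fill in a common lower bound, and I glue these together to produce a single $\gamma$ reachable from both ends.

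\medskip

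\noindent\textbf{Main obstacle.} The crux — and the place where adaptability is truly used — is guaranteeing termination-in-the-right-sense so that the diamonds actually close to a \emph{common} $\gamma$ rather than merely to mutually reachable but divergent terms. The danger is the presence of the loops $\alpha(p,i)$ at free vertices, which a priori allow unboundedly long reductions and could break the finiteness needed for a Newman-style argument. The structural escape valve is that every set $X^{(p)}_i$ containing a loop at $v^p$ \emph{also} contains at least one connector $\beta(p,i,t)$ (as $g(p,i)\ge 1$) whose range lies in a strictly lower component $q<p$, and $I$ is finite; so although a vertex can reproduce itself, each rewrite at $v^p$ necessarily emits mass into strictly lower strata of the finite poset $I$. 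I would make this precise by an induction on the poset $I$ (or a well-founded rank/weight on $F$ that strictly decreases under the ``downward'' part of each move), which is what forces the local diamonds to assemble into genuine confluence and furnishes the common $\gamma$.
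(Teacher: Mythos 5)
Your Step 1 is sound, and it correctly isolates where adaptability enters: at a regular vertex $w$ there is no ambiguity since $|C_w|=1$, and at a non-minimal free vertex $v^p$ the loop contained in each $X^{(p)}_i$ guarantees that $v^p$ survives any rewrite, so two rewrites of the same occurrence by $X\neq Y$ in $C_{v^p}$ admit the common descendant $\mathbf{r}(X)+\mathbf{r}(Y)-v^p$ (plus the untouched part). The genuine gap is in your globalization: in the ``Main obstacle'' paragraph you hang everything on a Newman-type argument, i.e.\ on ``termination-in-the-right-sense'' or a well-founded rank on $F$ that strictly decreases along (the downward part of) moves. No such rank exists and no termination argument can work here. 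Every elementary move replaces a single vertex occurrence by at least two occurrences: at a regular vertex because $|s_{E_p}^{-1}(w)|\ge 2$, and at a non-minimal free vertex because $X^{(p)}_i$ consists of the loop together with $g(p,i)\ge 1$ connectors. Thus every move strictly increases total mass, any element whose support contains a non-sink vertex initiates infinite reduction chains, and the system is maximally non-terminating. An induction over the finite poset $I$ does not repair this: the regular components $E_p$ are transitive, hence contain cycles, so rewriting is already non-terminating inside a single stratum. Since local confluence without termination does not imply confluence (the standard counterexamples apply), the step of your proof that is supposed to produce the common $\gamma$ is, as written, unjustified.

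The fix is available inside your own Step 1, but it is a different mechanism than the one you invoke: your diamonds close in \emph{one} step on each side. From $\mathbf{r}(X)+\delta'$ a single application of $Y$ to the surviving copy of $v^p$ gives $\mathbf{r}(X)+\mathbf{r}(Y)-v^p+\delta'$, and symmetrically from $\mathbf{r}(Y)+\delta'$; disjoint occurrences likewise commute in one step. So $\to_1$ has the diamond property up to equality, and the standard strong-confluence (parallel moves) induction then gives confluence of $\to$ with no well-foundedness hypothesis at all --- the induction on the lengths of the two reduction sequences stays well-founded precisely because each tile costs at most one step per side. For comparison, the paper does not pass through local confluence of $\to_1$ at all: following \cite[Lemma 4.3]{AMFP}, it inducts directly on the length of the zigzag $\alpha=\alpha_0,\dots,\alpha_n=\beta$, and in the problematic case ($[v]\in\Ifree$, $\alpha_{n-1}=v+\alpha'_{n-1}$, $\beta=\mathbf{r}(X)+\alpha'_{n-1}$, with $\mathbf{r}(Y)\to\lambda(v)$ for possibly $Y\neq X$) it observes that $v$ must remain in $\supp(\lambda)$, so that $\lambda\to_1\lambda+(\mathbf{r}(X)-v)$ serves as the common reduct; the case $[v]\in\Ireg$ is exactly as in \cite{AMFP}. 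Either route is legitimate, but your outline becomes a proof only after the Newman/termination appeal is replaced by the one-step-diamond observation.
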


\begin{proof}
	The proof is similar to the proof of \cite[Lemma 4.3]{AMFP}.
	We highlight the point in which both proofs differ. 
	
	Assume that $\alpha \sim \beta$. Then there exists a finite string
	$\alpha =\alpha _0,\alpha_1, \dots ,\alpha _n=\beta$ such that,
	for each $i=0,\dots ,n-1$, either $\alpha _i\rightarrow _1 \alpha
	_{i+1}$ or $\alpha_{i+1}\rightarrow_1 \alpha _i$. We proceed by
	induction on $n$. If $n=0$, then $\alpha=\beta$ and there is
	nothing to prove. Assume the result is true for strings of length
	$n-1$, and let $\alpha =\alpha_0,\alpha_1, \dots ,\alpha_n=\beta$
	be a string of length $n$. By induction hypothesis, there is
	$\lambda\in F$ such that $\alpha\rightarrow \lambda$ and
	$\alpha_{n-1}\rightarrow \lambda$. Now there are two cases to
	consider. If $\beta\rightarrow_1 \alpha_{n-1}$, then $\beta
	\rightarrow \lambda$ and we are done. Assume that
	$\alpha_{n-1}\rightarrow _1 \beta$. By definition of $\rightarrow
	_1$, there is a basis element $v\in E^0$ in the support of $\alpha
	_{n-1}$  and $X\in C_v$ such that $\alpha _{n-1}=v+\alpha_{n-1}'$ and $\beta =
	{\bf r}(X)+\alpha _{n-1}'$. By Lemma \ref{division}, we have
	$\lambda = \lambda (v)+\lambda '$, where $v\rightarrow \lambda
	(v)$ and $\alpha_{n-1}'\rightarrow \lambda '$. If the length of
	the string from $v$ to $\lambda (v)$ is positive, then we have
	${\bf r}(Y)\rightarrow \lambda (v)$ for some $Y\in C_v$.
	If $[v]\in \Ireg $, then $X=Y$ and the proof continues as in \cite[Lemma 4.3]{AMFP}.
	If $[v]\in \Ifree $, then $X$ may be distinct from $Y$, but in this case we play with the special form of the sets in $C_v$.
	Indeed, assume that $[v]\in \Ifree$.  
	In this case, write $\lambda '' := \lambda + ({\bf r}(X)-v)$.
	Then we have
	\begin{align*}
	\beta &  = {\bf r}(X) + \alpha_{n-1}'  = v+  ({\bf r}(X)-v) + \alpha_{n-1}' \\
	& \rightarrow _1 {\bf r} (Y) + ({\bf r}(X)-v) + \alpha_{n-1}' \\
	& \rightarrow \lambda (v) + \alpha'_{n-1} + ({\bf r}(X) -v) \\
	&  \rightarrow \lambda (v)+ \lambda' + ({\bf r}(X)-v) \\
	& = \lambda + ({\bf r}(X)-v) = \lambda ''.
	\end{align*}
	On the other hand, since $v+\alpha _{n-1}'\to \lambda$ and since $[v] \in \Ifree$, it follows easily by induction
	on the length of this string that $v\in \mbox{supp}(\lambda)$ and thus $\lambda \to_1 \lambda + ({\bf r}(X) - v) = \lambda ''$.
	Hence $\alpha \to \lambda \to \lambda ''$ and $\beta \to \lambda''$, as desired.

	In the remaining case
	that $v=\lambda (v)$, set $\gamma= {\bf r}(X) +\lambda '$.
	Then we have $\lambda \rightarrow _1 \gamma $ and so $\alpha
	\rightarrow \gamma $, and also $\beta ={\bf r}(X) +\alpha
	_{n-1}'\rightarrow {\bf r}(X)+\lambda '=\gamma $. This concludes
	the proof.
\end{proof}

Now, exactly the same proof as in \cite[Proposition 4.4]{AMFP} (using Lemmas \ref{division} and \ref{lem:confluence}) gives the following result.

\begin{proposition}
	\label{prop:refinement} Let $(E,C)$ be an adaptable separated graph. Then the monoid $M(E,C)$ is a refinement monoid.
\end{proposition}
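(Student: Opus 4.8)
The plan is to follow the blueprint of \cite[Proposition 4.4]{AMFP} almost verbatim, since the essential combinatorial work has already been isolated in Lemmas \ref{lem:division} and \ref{lem:confluence}, and these are stated exactly for adaptable separated graphs. The refinement property asks that whenever $a+b=c+d$ in $M(E,C)=F/{\sim}$, the four elements can be jointly refined. First I would lift such an equation to the free commutative monoid $F$: choose representatives $\alpha_a,\alpha_b,\alpha_c,\alpha_d\in F$ of $a,b,c,d$, so that $\alpha_a+\alpha_b\sim\alpha_c+\alpha_d$. The whole point of confluence is that this congruence relation can be replaced by a common descendant: by Lemma \ref{lem:confluence}, there is $\gamma\in F$ with $\alpha_a+\alpha_b\rightarrow\gamma$ and $\alpha_c+\alpha_d\rightarrow\gamma$. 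This converts a two-sided zig-zag of relations into two one-directional reduction strings meeting at $\gamma$, which is precisely what makes the refinement argument tractable.

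The next step is to split $\gamma$ along both decompositions using the division lemma. Applying Lemma \ref{lem:division} to $\alpha_a+\alpha_b\rightarrow\gamma$, I would write $\gamma=\gamma_a+\gamma_b$ with $\alpha_a\rightarrow\gamma_a$ and $\alpha_b\rightarrow\gamma_b$; applying it again to $\alpha_c+\alpha_d\rightarrow\gamma$ gives $\gamma=\gamma_c+\gamma_d$ with $\alpha_c\rightarrow\gamma_c$ and $\alpha_d\rightarrow\gamma_d$. Now I have two decompositions of the \emph{same} element $\gamma$ of the free commutative monoid $F$. Since $F$ is free on $E^0$, it has unambiguous refinement at the level of basis elements: two expressions $\gamma_a+\gamma_b=\gamma_c+\gamma_d$ in $F$ can be refined by simply collecting, for each basis vertex, how its multiplicity is distributed among the four summands. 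Concretely, one obtains $w,x,y,z\in F$ with $\gamma_a=w+x$, $\gamma_b=y+z$, $\gamma_c=w+y$, $\gamma_d=x+z$.

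Finally I would push everything back down to $M(E,C)$ under the quotient map $F\to F/{\sim}=M(E,C)$. Because $\alpha_a\rightarrow\gamma_a$ implies $a=[\alpha_a]=[\gamma_a]$ in $M(E,C)$ (reductions lie inside the congruence), and similarly for $b,c,d$, the images of $w,x,y,z$ furnish the desired refinement of $a+b=c+d$. That is, setting $\bar w=[w]$, $\bar x=[x]$, $\bar y=[y]$, $\bar z=[z]$, we get $a=\bar w+\bar x$, $b=\bar y+\bar z$, $c=\bar w+\bar y$, $d=\bar x+\bar z$. The one subtlety worth flagging is that confluence and the division lemma are stated for \emph{nonzero} elements of $F$, so I would handle the degenerate cases (where some of $a,b,c,d$ vanish) separately at the outset; these are trivial, since if say $a=0$ one may take $w=x=0$. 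The main obstacle is not in the present proposition at all but was already absorbed into Lemma \ref{lem:confluence}, whose proof had to deal with the non-confluent behaviour at free vertices $[v]\in\Ifree$ where distinct $X,Y\in C_v$ can appear; once confluence holds, the refinement argument is purely formal and identical to the non-separated case.
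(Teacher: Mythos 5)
Your proposal is correct and takes essentially the same route as the paper: the paper's proof consists of the single remark that ``exactly the same proof as in \cite[Proposition 4.4]{AMFP} (using Lemmas \ref{lem:division} and \ref{lem:confluence})'' applies, and that proof is precisely the scheme you describe --- lift to $F$, obtain a common descendant $\gamma$ via confluence, split it both ways via the division lemma, refine in the free commutative monoid, and project back to $M(E,C)$. You are also right that the genuine difficulty was already absorbed into Lemma \ref{lem:confluence}, which is where the adaptability hypothesis (the special form of the sets $C_v$ at free vertices) is actually used.
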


We now show that, for any adaptable separated graph, the monoid $M(E,C)$ is a primely generated monoid.

\begin{proposition}
	\label{prop:MECisprimelygenerated}
	Let $(E,C)$ be an adaptable separated graph and let $(I,\le )$ be the antisymmetrization of $E^0$ with respect to the path-way pre-order. Then 
	$M(E,C)$ is a primely generated conical refinement monoid.
	\end{proposition}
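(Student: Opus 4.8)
The plan is to prove that $M(E,C)$ is a primely generated conical refinement monoid by assembling results already in place and then exhibiting a generating set of primes. Refinement is already granted by Proposition \ref{prop:refinement}, so the remaining tasks are conicality and prime generation. Conicality is essentially immediate: every defining relation in \eqref{(M)} has the form $a_v = \sum_{e\in X} a_{r(e)}$, so each generator $a_v$ equals a nonempty sum of generators; consequently, on the level of the free monoid $F$, the confluence Lemma \ref{lem:confluence} shows that if $\alpha + \beta \sim 0$ then we would need $\alpha + \beta \rightarrow \gamma$ and $0 \rightarrow \gamma$, forcing $\gamma = 0$ and hence $\alpha = \beta = 0$ since $\rightarrow_1$ never decreases support to the empty word. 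I would phrase this carefully using that $0$ is the only element reachable from $0$.

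The heart of the argument is to identify enough prime elements. I would work with the antisymmetrization $(I,\le)$ and, for each $p\in I$, consider the elements $a_{v}$ with $[v]=p$. The natural candidate primes are attached to the archimedean components $G_{M(E,C)}[a_v]$. First I would verify that for $[v]\in I_{\mathrm{reg}}$ the transitivity of $E_p$ (Definition \ref{def:adaptable-sepgraphs}(1)) together with the single-partition condition (item (2)) forces all the $a_w$ with $[w]=p$ to be mutually equivalent in the algebraic preorder, and that $a_v$ is \emph{regular} (satisfying $2a_v \le a_v$), so that $G_{M(E,C)}[a_v]$ is an abelian group and $a_v$ is its neutral-type generator. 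For $[v]\in I_{\mathrm{free}}$, the loop structure in item (3), namely $a_{v^p} = a_{v^p} + (\text{lower terms})$ coming from each $X^{(p)}_i$, shows $a_{v^p}$ is a \emph{free} element. In either case I would argue that the class of $a_v$ in the quotient by the order-ideal generated by all strictly smaller components is a prime element, exploiting the confluence lemma to control when $a_v \le \alpha + \beta$.

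The key step is then to show primely generated. The strategy is to induct on the finite poset $I$, stripping off minimal components. For a minimal $p$, the vertices in $E_p^0$ either are sinks (when $p\in I_{\mathrm{free}}$, by item (3)) or generate a simple cancellative/idempotent piece (when $p\in I_{\mathrm{reg}}$), and in both cases one checks directly that $a_v$ is prime. For non-minimal $p$, using that the connectors all go to strictly lower components, one sees that modulo the order-ideal $I_{<p}$ generated by those lower components the element $a_v$ behaves exactly as in the minimal case, hence its image is prime; by the order-ideal extension properties of primes one lifts this to conclude each $a_v$ is a sum of primes. Since the $a_v$ generate $M(E,C)$ and every element is a finite sum of generators, prime generation of the whole monoid follows. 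I would then invoke that every non-invertible element is a sum of these prime generators.

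The main obstacle I anticipate is the prime-generation step, specifically verifying that the image of $a_v$ in the quotient by the lower order-ideal is genuinely prime rather than merely indecomposable, and that these quotient-primes lift back to primes (or to sums of primes) in $M(E,C)$ itself. This requires a clean description of the order-ideal lattice of $M(E,C)$ in terms of lower subsets of $I$ — plausibly that order-ideals correspond bijectively to lower sets $J\subseteq I$ via $v\mapsto [v]$ — and a careful use of confluence to analyze inequalities $a_v \le \alpha+\beta$. Establishing that lattice correspondence, and then matching the free/regular dichotomy of Definition \ref{def:adaptable-sepgraphs} with the free/regular elements of the refinement monoid (via \cite{Brook}), is where the real work lies; the conicality and the mechanical application of Proposition \ref{prop:refinement} are comparatively routine.
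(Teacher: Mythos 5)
Your reduction of the problem to Proposition \ref{prop:refinement} (refinement) and to conicality is fine, but the crux of the statement --- prime generation --- has a genuine gap. Your plan reduces everything to the claim that, for a non-minimal $p$, primality of the image of $a_v$ in the quotient of $M(E,C)$ by the order-ideal generated by the strictly lower components lifts back to give that $a_v$ is a sum of primes in $M(E,C)$ itself, justified only by appeal to ``order-ideal extension properties of primes''. No such lifting principle holds in general: primality does not pass from a quotient back to the ambient monoid. For instance, $M=\mathbb{R}^{+}\times\mathbb{Z}^{+}$ is a conical refinement monoid and $S=\mathbb{R}^{+}\times\{0\}$ is an order-ideal with $M/S\cong\mathbb{Z}^{+}$; the image of $(1,1)$ in $M/S$ is prime, yet $(1,1)$ is not a sum of primes in $M$, because the only prime of $M$ is $(0,1)$. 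To rescue the induction you would have to exploit that the lower order-ideal is itself primely generated (your inductive hypothesis) and then prove an extension-type theorem for primely generated refinement monoids; such a result is neither proved in your proposal nor available among the results the paper quotes, and you yourself flag exactly this step as ``where the real work lies''. Your plan also presupposes the correspondence between order-ideals of $M(E,C)$ and lower subsets of $I$, and the identification of the quotient with the monoid of a restricted graph --- further unproven infrastructure. So what you have is an outline whose central step is missing.

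For comparison, the paper needs none of this apparatus: it proves outright that \emph{every} generator $a_v$ is prime, using precisely the two lemmas you already invoke for conicality. If $[v]+[\delta]=[\alpha_1]+[\alpha_2]$ in $M(E,C)$, then Lemma \ref{lem:confluence} yields $\gamma$ with $v+\delta\to\gamma$ and $\alpha_1+\alpha_2\to\gamma$, Lemma \ref{lem:division} splits $\gamma=\gamma_1+\gamma_2$ with $\alpha_i\to\gamma_i$, and the structure of an adaptable separated graph guarantees that the support of $\gamma$ still dominates $v$: for $[v]\in\Ifree$ because every $X\in C_{v^p}$ contains a loop at $v^p$, so $v$ itself survives every rewriting step, and for $[v]\in\Ireg$ because rewriting always leaves some vertex $w$ of the transitive component $[v]$ in the support, whence $a_w\ge a_v$. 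Therefore $[v]\le[\alpha_i]$ for some $i$, and since the $a_v$ generate $M(E,C)$, prime generation follows at once --- no induction on $I$, no quotients, no lifting. Conicality is simply quoted from \cite[Lemma 4.2]{AG12} (your confluence argument for it is fine, modulo the fact that Lemma \ref{lem:confluence} is stated for nonzero elements). Two smaller inaccuracies in your sketch: the relation $a_{v^p}=a_{v^p}+(\text{lower terms})$ alone does not show $a_{v^p}$ is free (it does not exclude $2a_{v^p}\le a_{v^p}$ without a further support-counting argument), and the free/regular dichotomy of the generators, while true, is never needed to establish primality.
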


\begin{proof} By \cite[Lemma 4.2]{AG12}, $M(E,C)$ is a nonzero, conical monoid whenever $(E,C)$ is an arbitrary finitely separated graph such that $E^0$ is non-empty.

	Suppose now that $(E,C)$ is an adaptable separated graph. By Proposition \ref{prop:refinement}, $M(E,C)$ is a refinement monoid.  
	We now show that $M(E,C)$ is primely generated. For this, it is enough to observe that each generator $a_v$, with $v\in E^0$ is prime in $M(E,C)$.
	For this purpose, we work in the free monoid $F$ generated by $E^0$ and we use the notation introduced above. We have to show that if we have a relation $[v]+[\delta] = [\alpha_1] + [\alpha_2]$ in $F/{\sim} =M(E,C)$, 
	then there is $i\in \{ 1,2 \}$ such that $[v]\le [\alpha_i]$. Now since $v+\delta \sim  \alpha_1 + \alpha_2$ in $F$, we have by Lemma \ref{lem:confluence} that there is $\gamma \in F$ such that
	$v+\delta \to \gamma $ and $\alpha _1+\alpha_2 \to \gamma $. By Lemma \ref{lem:division}, we can write $\gamma = \gamma_1 + \gamma_2$ with $\alpha_i \to \gamma_i$ for $i=1,2$. Since 
	$v+\delta \to \gamma $ and, by the definition of an adaptable separated graph, each $X\in C$ contains at least a loop, we see that $v$ belongs to the support of $\gamma $. Therefore, it belongs to the support of 
	$\gamma _i$ for some $i\in \{ 1,2 \}$. We can thus assume that $\gamma_1= v+\gamma _1'$ and therefore
	$$[\alpha_1] = [\gamma _1]= [v] + [\gamma _1'] \, ,$$
	showing that $[v]\le [\alpha_1]$, as desired. 
\end{proof}

It follows from Proposition \ref{prop:MECisprimelygenerated} and \cite[Theorem 2.7]{AP16} that for any adaptable separated graph $(E,C)$ there exists a poset $\mathbb P$, a partition
$\mathbb P =\mathbb P_{{\rm free}} \sqcup \mathbb P_{{\rm reg}}$, and a $\mathbb P$-system
$\mathcal J$ such that $M(E,C)\cong M(\mathcal J)$. We close this subsection by explicitly computing this system. Together with our main result in the next subsection (Theorem \ref{thm:realM-by-sepgraph}), 
this allows us to express all the 
structure of a finitely generated conical refinement monoid in terms of the information contained in a representing adaptable separated graph. 

Let $(E,C)$ be an adaptable separated graph and let $(I,\le )$ be the antisymmetrization of $E^0$ with respect to the path-way pre-order. In order to neatly express our result, we first define a certain $I$-system
and then we will show it is isomorphic to the system corresponding to $M(E,C)$.

\begin{definition}
 \label{def:Isystemofadpasepgraph}
 {\rm Let $(E,C)$ be an adaptable separated graph, let $(I,\le )$ be the antisymmetrization of $E^0$, and let 
 $I=\Ifree \sqcup \Ireg$ be the canonical partition of $I=E^0/{\sim}$ (see Definition \ref{def:adaptable-sepgraphs}). Define an $I$-system 
 $\mathcal J ''= (I,\le , (G_p'')_{p\in I}, \varphi''_{p,q} \, (q<p))$ as follows:
 
\noindent $(1)$ For each $p\in \Ifree$ minimal, define $G''_p:=\{0\} \,\,(i.e.\,\, M_p=\mathbb N)$.  
	Now for each non-minimal $p\in \Ifree$ , consider the abelian group $G''_p$ generated by elements $x^p_w$, where $w$ is a vertex in $E$ such that $[w]<p=[v^p]$, subject to the 
	relations 
	\begin{equation}
	\label{eq:relation-for-w-regular}
	x^p_w=\sum_{e\in s_E^{-1}(w)} x^p_{r(e)},\qquad [w]\in \Ireg, 
	\end{equation}
and 
\begin{equation}
	\label{eq:relation-for-w-free}
	\sum_{j=1}^{g(q,i)} x_{r(\beta (q,i,j))}^p =0,\qquad (i=1,\dots , k(q)) \quad \text{for } q\in \Ifree, q\le p .
	\end{equation}
$(2)$ For $p\in \Ireg$, we let $G''_p$ be the abelian group with generators $x^p_w$, where $w$ is a vertex in $E$ such that $[w]\le p$, and with relations \eqref{eq:relation-for-w-regular} for every $w\in E^0$
	such that $[w]\in \Ireg$ and $[w]\le p$, and \eqref{eq:relation-for-w-free} for every $q\in \Ifree$ (note that in the latter case, $q<p$ for any $q\in \Ifree$, because $p\in \Ireg$). 
	
Recalling that $M''_p=G''_p$ if $p\in \Ireg$ and $M''_p= \N \times G''_p$ is $p\in \Ifree $, we now define the connecting homomorphisms $\varphi''_{p,q}\colon M''_q\to G''_p$, for $q<p$, as follows:  
 $$\varphi''_{p,q} (x_w^q) = x_w^p, \qquad \text{if } q\in \Ireg, $$
         and 
         $$\varphi'' _{p,q} (n, \sum_{w<v^q} c_wx_w^q) = nx^p_{v^q} + \sum_{w<v^p} c_wx_w^p , \qquad \text{if } q\in \Ifree  $$
	where $n\in \N$, and $c_w\in \Z$ are almost all $0$. It is straightforward to show that $\mathcal J''$ is an $I$-system.}
  \end{definition}

\begin{remark}
 \label{rem:grothendieck-for-regulars}
 {\rm 
 Note that, in case $p\in \Ireg$, the relations in $G_p''$ can be expressed in the form $x^p_w = \sum _{e\in X} x^p_{r(e)}$, for each $X\in C_w$ and each $w\in E^0$ such that $[w]\le p$. 
 The resulting group is therefore the Grothendieck group
of the monoid $M(E_H, C^H)$, where $(E_H, C^H)$ is the restriction of the separated graph $(E,C)$ to the hereditary set $H := \{ w\in E^0: [w]\le p \}$.
However, this is not the case when $p\in \Ifree$, due to the fact that, in that case, we are only considering generators $x^p_w$ for $w\in E^0$ such that $[w]< p$.} 
 \end{remark}

\begin{proposition}
	\label{prop:MEC-theIsystem}
	Let $(E,C)$ be an adaptable separated graph, let $I=\Ifree \sqcup \Ireg$ be the canonical partition of $I=E^0/{\sim}$, and let $\mathcal J''$ be the $I$-system 
	of Definition \ref{def:Isystemofadpasepgraph}.
	Let $\mathbb P=\mathbb P_{{\rm free}} \sqcup \mathbb P_{{\rm reg}}$ 
	be the poset associated to $M(E,C)$, and $\mathcal J = (\mathbb P, \le , (G_p)_{p\in \mathbb P},\varphi_{p,q}\, (q<p))$ be the corresponding $\mathbb P$-system. 
	Then there is an isomorphism of systems $\mathcal J''\cong \mathcal J$. In particular 
	$$M(E,C) \cong M(\mathcal J) \cong M(\mathcal J'').$$
	\end{proposition}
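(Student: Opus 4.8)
The plan is to produce an explicit isomorphism of $I$-systems $\mathcal{J}''\cong\mathcal{J}$ and then apply the passage $\mathcal{K}\mapsto M(\mathcal{K})$ from $I$-systems to monoids; combined with $M(E,C)\cong M(\mathcal{J})$, which comes from Proposition \ref{prop:MECisprimelygenerated} together with \cite[Theorem 2.7]{AP16}, this yields the stated chain of isomorphisms. The first step is the comparison of the underlying posets. By Proposition \ref{prop:MECisprimelygenerated} the elements $a_v$ ($v\in E^0$) are prime, and using confluence (Lemma \ref{lem:confluence}) one sees that $a_v\le a_w$ in the algebraic preorder precisely when $[v]\le[w]$ in $I$; hence $a_v$ and $a_w$ generate the same archimedean component if and only if $[v]=[w]$, and the poset $\mathbb{P}$ of prime (archimedean) classes is identified with $I$. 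To match the partitions I would check that $a_{v^p}$ is free for $p\in\Ifree$, because the loop $\alpha(p,i)$ produces the relation $a_{v^p}=a_{v^p}+\sum_j a_{r(\beta(p,i,j))}$ characteristic of a free element, while $a_w$ is regular for $[w]=p\in\Ireg$, because the relation $a_w=\sum_{e\in s^{-1}(w)}a_{r(e)}$ inside the transitive multi-edge graph $E_p$ forces $2a_w\le a_w$. This gives an isomorphism $\mathbb{P}\cong I$ respecting $\mathbb{P}_{\mathrm{reg}}\leftrightarrow\Ireg$ and $\mathbb{P}_{\mathrm{free}}\leftrightarrow\Ifree$.

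Since $\mathcal{J}''$ is presented by generators and relations, I would set up the comparison in the direction $\mathcal{J}''\to\mathcal{J}$, sending, for each $p$, the generator $x^p_w$ to the canonical class of $a_w$ in the group $G_p$ that the construction of \cite{AP16} attaches to the prime class $p$. For $p\in\Ireg$ this group is the Grothendieck group of the archimedean component, which Remark \ref{rem:grothendieck-for-regulars} identifies with $G(M(E_H,C^H))$ for $H=\{w:[w]\le p\}$; the relations \eqref{eq:relation-for-w-regular} and \eqref{eq:relation-for-w-free} are exactly the defining relations of this Grothendieck group, so $x^p_w\mapsto[a_w]$ defines an isomorphism $G''_p\to G_p$. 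For $p\in\Ifree$ the component is $M_p=\N\times G_p$; I would send the $\N$-coordinate to the class of $a_{v^p}$ and $x^p_w\mapsto[a_w]$ for $[w]<p$, noting that the loop relation yields $\sum_j[a_{r(\beta(p,i,j))}]=0$, i.e. exactly \eqref{eq:relation-for-w-free}, so that the map descends, and that the surjectivity clause of Definition \ref{def:I-system}(c)(2) holds automatically because the $x^p_w$ with $[w]<p$ generate $G''_p$.

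It then remains to verify that these group isomorphisms intertwine the connecting maps, that is, that $\varphi''_{p,q}$ corresponds to the structural homomorphism $\varphi_{p,q}$ of $\mathcal{J}$ (the map induced on the associated groups by the inclusion of the archimedean component of class $q$ into the ambient structure). This is a direct comparison in the two cases recorded in Definition \ref{def:Isystemofadpasepgraph}: for $q\in\Ireg$ one checks $[a_w]\mapsto[a_w]$, while for $q\in\Ifree$ one checks that the free coordinate is carried to the class of $a_{v^q}$, matching the formula $\varphi''_{p,q}(n,\sum_w c_w x^q_w)=n x^p_{v^q}+\sum_w c_w x^p_w$. Granting the compatibility of $M(-)$ with isomorphisms of $I$-systems, the identifications of posets, groups and connecting maps assemble into $\mathcal{J}''\cong\mathcal{J}$, whence $M(\mathcal{J}'')\cong M(\mathcal{J})\cong M(E,C)$.

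The step I expect to be the main obstacle is the computation of the group $G_p$ for free primes and its identification with $G''_p$. Here one must show that the canonical group carries no relations beyond \eqref{eq:relation-for-w-regular} and \eqref{eq:relation-for-w-free}, and in particular that it is generated only by the classes $[a_w]$ with $[w]$ \emph{strictly} below $p$ (and not $[w]=p$), the free generator $a_{v^p}$ being split off into the $\N$-factor. Proving the absence of extra relations, equivalently the injectivity of $x^p_w\mapsto[a_w]$, rests on a careful confluence analysis via Lemma \ref{lem:confluence}, tracking how the loops at $v^p$ act on supports, and on reconciling this with the precise conventions by which \cite{AP16} builds $G_p$ in the free case; this bookkeeping is where the effort concentrates.
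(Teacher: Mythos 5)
Your outline reproduces the paper's own skeleton --- identify $\mathbb P$ with $I$ together with the free/regular partitions, send $x^p_w$ to the class of $a_w$ in the group that \cite{AP16} attaches to the prime class $p$, and check the intertwining with the connecting maps --- but the mathematical core of the proposition, namely that these comparison maps $G''_p\to G_p$ are \emph{bijective}, is nowhere proved in your proposal; this is a genuine gap, not residual bookkeeping. For $p\in\Ifree$ you explicitly leave it open, naming Lemma \ref{lem:confluence} as the relevant tool but not the mechanism. What makes the argument work in the paper is the following. For surjectivity: every hereditary subset of $E^0$ is $C$-saturated (each $X\in C$ contains an edge whose range lies in the component of its source), so by \cite[Corollary 6.10]{AG12} the order-ideal generated by $a_{v^p}$ is generated as a monoid by the elements $a_w$ with $w\le v^p$; hence any $\alpha$ with $a_{v^p}+\alpha\le a_{v^p}$ is a sum of such $a_w$, and freeness of $a_{v^p}$ then removes the terms with $w\in [v^p]$. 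For injectivity: given $\sum_{w\in A}n_w x^p_w-\sum_{w'\in B}m_{w'}x^p_{w'}$ in the kernel, Lemma \ref{lem:confluence} produces $\gamma=v^p+\sum_{w<v^p}l_w w$ with $v^p+\sum_{w\in A}n_w w\to\gamma$ and $v^p+\sum_{w'\in B}m_{w'}w'\to\gamma$ in the free monoid $F$, and one then \emph{simulates} every elementary step $\to_1$ of these strings inside the presented group $G''_p$: a step applied at a free vertex $v^q$ ($q\le p$) is replaced by the relation \eqref{eq:relation-for-w-free}, and a step applied at a regular vertex by \eqref{eq:relation-for-w-regular}. This rewrites both sums to $\sum_{w<v^p}l_w x^p_w$ in $G''_p$, so the kernel element is $0$. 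This translation of rewriting strings in $F$ into identities in the presented group is precisely the idea your proposal lacks.

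The regular case in your proposal is moreover circular. Remark \ref{rem:grothendieck-for-regulars} identifies the \emph{presented} group $G''_p$ with $G(M(E_H,C^H))$; it says nothing about $G_p$, which \cite[Section 2]{AP16} defines as the archimedean component of $a_v$ inside $M(E,C)$ for $p=[v]$. The assertion that this archimedean component carries no relations beyond those of the monoid $M(E_H,C^H)$ --- equivalently, that $x^p_w\mapsto e_p+a_w$ is injective, $e_p$ being the neutral element of $G_p$ --- is exactly what must be proved, and the paper proves it by the same confluence-and-simulation argument (``one may check as before''). Similarly, your justification that $a_{v^p}$ is free because of the relation $a_{v^p}=a_{v^p}+\sum_j a_{r(\beta(p,i,j))}$ does not stand: relations of the form $a=a+x$ are also satisfied by regular and idempotent elements, and freeness ($na\le ma\Rightarrow n\le m$) needs its own argument --- the paper obtains the free/regular matching of the partitions by identifying the antisymmetrization of $M(E,C)$ with the auxiliary monoid $M(I,\vartriangleleft)$ and invoking the structure theory of \cite{AP16}. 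In short: the strategy is the right one and coincides with the paper's, but every point at which real work is required is either deferred or replaced by an appeal that does not apply.
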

	
	\begin{proof}
	Since $M(E,C)$ is a primely generated conical refinement monoid, there is a $\mathbb P$-system $\mathcal J$  such that $M(E,C)\cong M(\mathcal J)$. This system is described in detail in
	\cite[Section 2]{AP16}. We are going to follow that reference in order to identify the $\mathbb P$-system $\mathcal J$ with the $I$-system $\mathcal J''$.
	The first thing we do is to identify $\mathbb P$ with $I$.

	Let us define a relation $\vartriangleleft$ on $I$ as follows. For $p,q\in I$, set $p\vartriangleleft q$ if $p< q$ or $p=q\in \Ireg $. Observe that $\vartriangleleft$ is an antisymmetric and transitive relation on $I$. 
	Now define the monoid
	$M(I,\vartriangleleft)$ as the commutative monoid with family of generators $I$ and with relations $q=q+p$ if $p\vartriangleleft q$. The monoid $M(I, \vartriangleleft)$ is an antisymmetric finitely generated refinement monoid, and 
	its set of primes is precisely $I$. Moreover, the regular (resp. free) primes of $M(I,\vartriangleleft)$ are exactly the elements in $\Ireg $ (resp. $\Ifree$).  
	Now, it is straightforward, using the defining properties of an adaptable separated graph, to show that the antisymmetrization $\ol{M(E,C)}$ of $M(E,C)$
	is isomorphic to $M(I,\vartriangleleft )$, sending $\ol{a_v}\in \ol{M(E,C)}$ to $[v]\in M(I,\vartriangleleft )$.
	It follows from the description of the poset $\mathbb P$ associated to $M(E,C)$ given in \cite[p. 390, (1)]{AP16} and the above observations that $\mathbb P$, with its canonical partition 
	$\mathbb P = \mathbb P_{{\rm free}} \sqcup \mathbb P _{{\rm reg}}$, can be identified with $I$, and its partition $I=\Ifree \sqcup \Ireg $. Hence, the construction in  \cite[Section 2]{AP16}
	gives rise to an $I$-system $\mathcal J = (I,\le, \{G_p\}_{p\in I}, \varphi_{pq}\, (q<p) )$.  
	
	It remains to identify the groups $G_p$, for $p\in I$, and the maps $\varphi_{pq}\colon M_q\to G_p$ for $q<p$ (see \cite[Section 2]{AP16}).
	First, we observe that every hereditary subset of $E^0$ is $C$-saturated, because each $X\in C$ contains at least one loop. Therefore it follows from \cite[Corollary 6.10]{AG12} that the order-ideal 
	of $M(E,C)$ generated by a hereditary subset $H$ of $E^0$ is generated as a monoid by $\{ a_v :v\in H \}$.

	  Following \cite[Section 2]{AP16}, the group $G_p'$ is defined for each $p\in \Ifree $ to be the set
	$$\{ a_{v^p}+ \alpha : \alpha \in M(E,C) \quad \text{and}\quad a_{v^p}+\alpha \le a_{v^p} \}, $$
	endowed with the product $(a_{v^p}+\alpha)\circ (a_{v^p}+\beta ) = a_{v^p}+ ( \alpha + \beta )$. We want to show that $G_p''\cong G_p'$.
	To this end we define a map $\lambda_p \colon G_p'' \to G_p'$ by $\lambda_p (x^p_w) = a_{v^p}+ a_w$ for $w\in E^0$ with $[w]<p$. 
	Clearly, the defining relations of $G_p''$ are preserved by $\lambda_p$, so this assignment 
	defines a group homomorphism. Now if $\alpha \in M(E,C)$ and $a_{v^p}+\alpha \le a_{v^p}$, then $\alpha $ belongs to the order-ideal of $M(E,C)$ generated by $a_{v^p}$, and by the previous remark, it follows that $\alpha $
	must be a sum of elements of the form $a_w$ with $w \le v$. Now, it follows from the fact that $a_{v^p}$ is free that $\alpha $ is a sum of elements of the form $a_w$ with $w<v^p$. 
	This shows that $\lambda_p$ is surjective. In order to show that $\lambda _p$ is injective, let $\sum _{w\in A} n_w x^p_w - \sum _{w'\in B} m_{w'} x^p_{w'}$ be an element in the kernel of $\lambda_p$,
	where $A \cap B= \emptyset$, and $n_w, m_{w'} >0$. It then follows that $a_{v^p}+ \sum _{w\in A} n_w a_w = a_{v_p} + \sum _{w'\in B} m_{w'} a_{w'}$ in $M(E,C)$. Let $F$ be the free commutative monoid generated by $E^0$.
	It follows from Lemma \ref{lem:confluence} that there is $\gamma \in F$ such that $v^p+ \sum _{w\in A} n_w w \to \gamma $ and $v^p + \sum _{w'\in B} m_{w'} w' \to \gamma $  in $F$. 
	Note that $\gamma = v^p +\gamma '$, where $\gamma '= \sum _{w<v^p} l_ww$ for some $l_w\ge 0$. Now we transform $\sum _{w\in A} n_w x^p_w$ using corresponding steps to the ones used in the
	transformation  $v^p+ \sum _{w\in A} n_w w \to \gamma $, replacing each occurrence of
	a step $v^q\to_1 v^q + \sum _{j=1}^{g(q,i)} r(\beta (q,i,j)) $ for some $i=1,\dots , k(q)$ by the identity $0= \sum_{j=1}^{g(q,i)} x_{r(\beta (q,i,j))}^p $ in $G_p''$, for each $i=1,\dots , k(q)$, if $[w]= q\in \Ifree$ 
	and $q\le p$, and each occurrence of a step $w\to_1 \sum _{e\in s^{-1}(w)} r(e)$ by the identity  $x^p_w= \sum_{e\in s_E^{-1}(w)} x^p_{r(e)}$
	if $[w]\in \Ireg$ and $[w] < p$. By using this process, we arrive at the identity $\sum _{w\in A} n_w x^p_w = \sum _{w<v^p} l_w x^p_w$ in $G_p''$. With the same reasoning, we obtain
	$\sum _{w'\in B} m_{w'} x^p_{w'} = \sum _{w<v^p} l_w x^p_w$. So we get that $\sum _{w\in A} n_w x^p_w - \sum _{w'\in B} m_{w'} x^p_{w'} = 0$, as desired. 
	Finally the group $G_p$ is naturally isomorphic to $G_p'$ through the map $G_p'\to G_p$, $a_{v^p}+\alpha \mapsto (a_{v^p}+\alpha ) - a_{v^p} $ (\cite[Remark 2.5]{AP16}), so we get the isomorphism
	$G_p''\cong G_p$, which sends $x^p_w$ to  $(a_{v^p}+a_w ) - a_{v^p}$.

	If $p=[v]\in \Ireg$, then the archimedian component of $a_v$ in $M(E,C)$ is a group, and $G_p$ is defined to be this group, see \cite[Section 2]{AP16}.
	Let $e_p$ be the neutral element of $G_p$. Then one may check as before that the map $\lambda_p \colon G_p''\to G_p$ given by $x^p_w\mapsto e_p + a_w$ for $[w]\le p$, is a group isomorphism.  
	
	Finally it is straightforward to show that $\varphi_{p,q} \circ \widetilde{\lambda}_q = \lambda_p \circ \varphi''_{p,q}$ whenever $q<p$ in $I$, 
	where $\widetilde{\lambda}_q\colon M_q''\to M_q$ is the map induced by $\lambda_q$. Hence we get an isomorphism of $I$-systems $\mathcal J'' \cong \mathcal J$.
	Since $M(\mathcal J) \cong M(E,C)$ (\cite[Theorem 2.7]{AP16}), we get the last assertion in the statement.  
	\end{proof}


\subsection{Representing finitely generated refinement monoids}
In this subsection, given any finitely generated conical refinement monoid $M$, we build an adaptable separated graph $(E,C)$ such that its 
associated monoid is isomorphic to $M$. 

To this end recall from Section \ref{sec:Preliminaries} and \cite[Sections 1 and 2]{AP16} that, given any finitely generated conical refinement monoid $M$, one canonically associates to it 
an $I$-system $$\mathcal  J =  \left(I, \leq , (G_i)_{i\in I}, \varphi_{ji}\, (i<j)\right)$$ 
such that $M\cong M(\mathcal J)$ (\cite[Theorem 2.7]{AP16}). Moreover, the $I$-system $\mathcal J$ is finitely generated (meaning 
that $I$ is finite and all the abelian groups $G_i$ are finitely generated, \cite[Proposition 2.9]{AP16}). 

We now remind some terminology and facts concerning our monoids  before proving the main result of this section (see \cite{Dobb84, Brook, AP16,AP17} for background material).



For $i\in I$, we define the {\it lower cover} $\rL (I,i)$ of $i$ in $I$ as 
 $$\rL (I,i):=\{j\in I\mid j<i \text{ and }[j,i]=\{j,i\}\}.$$
Let $p\in \Ifree $ and let  $\rL(I,p) = \{q_1, \dots , q_n \}$ be its lower cover. The archimedian component $M_p$ of $p$ has the form $M_{p}= \N\times G_p$ for the finitely generated abelian
group $G_p$. 

Using the notation established in \cite[Section 2]{AP17}, we denote by $J_p$ the lower subset of $I$ generated by $q_1,\dots ,q_n$, and let $M_{J_p}$ be the associated semigroup (cf. \cite[Corollary 2.4]{AP17}).
Then, by \cite[Lemma 5.1]{AP17}, there is a surjective semigroup homomorphism
$$\varphi _p \colon M_{J_p} \to G_p$$
which is induced by the various maps $\varphi_{pq}$ for $q<p$. Consequently, we obtain a surjective group homomorphism
$G(\varphi _p) \colon G(M_{J_p}) \to G_p$. We say that an element $x$ in $G(M_{J_p})$ is {\it strictly positive} if it belongs to the
image of the canonical map $\iota _{J_p} \colon  M_{J_p} \to G(M_{J_p})$. We write $G(M_{J_p})^{++}= \iota_{J_p} (M_{J_p})$ for the set of strictly positive elements.

With the notation above, we provide the last proposition needed for Theorem \ref{thm:realM-by-sepgraph}.

\begin{proposition}
	\label{prop:generation-of-kernel}
	With the above notation and caveats, we have that the kernel of $G(\varphi_p)$ is generated by a finite number $x_1,\dots , x_k$ of strictly positive elements.
\end{proposition}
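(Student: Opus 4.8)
The plan is to reduce the statement to a finiteness fact about abelian groups and then repair the signs using the surjectivity of $\varphi_p$. Since the $I$-system $\mathcal J$ is finitely generated, the semigroup $M_{J_p}$, and hence its Grothendieck group $G(M_{J_p})$, is finitely generated. Therefore the kernel $K:=\ker G(\varphi_p)$, being a subgroup of the finitely generated abelian group $G(M_{J_p})$, is itself finitely generated; fix a finite set of group generators $z_1,\dots,z_r$ of $K$. The point of the proposition is thus not that a finite generating set exists (which is automatic), but that it can be taken to consist of strictly positive elements.

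Next, for each $i$ I would write $z_i=\iota_{J_p}(a_i)-\iota_{J_p}(b_i)$ with $a_i,b_i\in M_{J_p}$, which is possible because $G(M_{J_p})^{++}=\iota_{J_p}(M_{J_p})$ is a subsemigroup generating $G(M_{J_p})$, so every element is a difference of strictly positive ones. The obstruction at this stage is that $\iota_{J_p}(a_i)$ and $\iota_{J_p}(b_i)$ are strictly positive but need not lie in $K$. To overcome this I invoke that $\varphi_p\colon M_{J_p}\to G_p$ is surjective (\cite[Lemma 5.1]{AP17}): choose $d_i\in M_{J_p}$ with $\varphi_p(d_i)=-\varphi_p(b_i)$ and set
$$w_i:=\iota_{J_p}(b_i+d_i),\qquad v_i:=z_i+w_i=\iota_{J_p}(a_i+d_i).$$
Both $w_i$ and $v_i$ are strictly positive by construction, and evaluating $G(\varphi_p)$ gives $G(\varphi_p)(w_i)=\varphi_p(b_i)+\varphi_p(d_i)=0$ and $G(\varphi_p)(v_i)=G(\varphi_p)(z_i)+G(\varphi_p)(w_i)=0$, so both belong to $K\cap G(M_{J_p})^{++}$.

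Finally, since $z_i=v_i-w_i$ for every $i$, the finite family $\{v_1,w_1,\dots,v_r,w_r\}$ of strictly positive elements of $K$ generates each $z_i$, and hence all of $K$. The main (and essentially only) obstacle is the middle positivity correction: once the surjectivity of $\varphi_p$ is used to absorb the negative part $\iota_{J_p}(b_i)$ into a strictly positive kernel element, the conclusion follows at once.
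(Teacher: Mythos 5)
Your proposal follows essentially the same route as the paper's proof: the kernel is finitely generated because $G(M_{J_p})$ is a finitely generated abelian group; each generator is written as a difference of strictly positive elements; and the surjectivity of $\varphi_p$ (\cite[Lemma 5.1]{AP17}) supplies a positive correction term added to both halves so that they land in $\ker G(\varphi_p)\cap G(M_{J_p})^{++}$. The paper does exactly this, quoting \cite[Lemma 5.3]{AP17} to write the decomposition explicitly in terms of the maps $\chi_{q_i}$, and adding a single element $z\in M_{J_p}$ to both positive parts --- precisely the role of your $d_i$.

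One step does need repair: you justify the finite generation of $G(M_{J_p})$ by claiming that the semigroup $M_{J_p}$ itself is finitely generated, and this is false in general. If, say, the lower cover of $p$ consists of a single free prime $q$ with $G_q=\Z$, then $M_{J_p}\cong \N\times\Z$, in which every element of the form $(1,n)$ is indecomposable, so no finite subset can generate it. The fact you actually need is nevertheless true: by \cite[Lemma 5.3]{AP17}, $M_{J_p}$ is the image of the semigroup homomorphism $\prod_{i=1}^{n} M_{q_i}\to M_{J_p}$ given by $(x_i)_i\mapsto \sum_{i=1}^n\chi_{q_i}(x_i)$, and since the Grothendieck group construction preserves surjections and finite products, $G(M_{J_p})$ is a quotient of $\prod_{i=1}^{n}\widehat{G}_{q_i}$, which is finitely generated because the $I$-system $\mathcal J$ is. (The paper asserts the finite generation of $G(M_{J_p})$ without further justification.) With that substitution your argument is complete and matches the paper's.
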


\begin{proof}
	Since $G(M_{J_p})$ is a finitely generated abelian group, we have that the kernel of $G(\varphi _p)$ is generated by a finite number of elements
	$y_1,\dots , y_m$.  So, it is enough to show that the subgroup generated by an element $y$ in the kernel of $G(\varphi _p)$ is contained in the subgroup generated by two
	strictly positive elements in the kernel of $G(\varphi_p)$.
	
	Recall that $\rL(I,p) = \{q_1, \dots , q_n \}$ is the lower cover of $p$. We assume that $q_1,\dots , q_r$ are free and that $q_{r+1},\dots , q_n$ are regular.  
	Now, let $y\in \ker (G(\varphi_p))$.  
	Using that the element $y$ can be expressed as a difference of two elements from $G(M_{J_p})^{++}$ and  \cite[Lemma 5.3]{AP17}, we see that there exist  
	positive integers $n_i, m_i$, $i=1,\dots ,r$, and elements $g_i\in G_{q_i}$, $i=1,\dots , n$, $h_j\in G_{q_j}$, $j=1,\dots , r$, such that
	$$ y=  \iota_{J_p} \Big( \sum_{i=1}^r \chi_{q_i} (n_i , g_i)  + \sum _{i=r+1}^n \chi_{q_i} (g_i)\Big) - \iota_{J_p} \Big( \sum_{j=1}^r \chi_{q_j} (m_j, h_j) \Big)$$
	Since $\varphi _p$ is surjective and $G(\varphi_p) (y) = 0$, there exists $z\in M_{J_p}$ such that
	$$ - \varphi_p \Big( \sum_{i=1}^r \chi_{q_i} (n_i , g_i)  + \sum _{i=r+1}^n \chi_{q_i} (g_i) \Big)  = - \varphi_p \Big(   \sum_{j=1}^r \chi_{q_j} (m_j, h_j) \Big) = \varphi_p (z).$$
	Therefore, if we define the elements $x_1= (\sum_{i=1}^r \chi_{q_i} (n_i , g_i)  + \sum _{i=r+1}^n \chi_{q_i} (g_i)) + z\in M_{J_p}$ and 
	$x_2 = (  \sum_{j=1}^r \chi_{q_j} (m_j, h_j) )  + z \in  M_{J_p}$, then we have $ \iota_{J_p}(x_1), \iota_{J_p} (x_2) \in \ker (\varphi_p)\cap G(M_{J_p})^{++}$, and $y = \iota_{J_p}(x_1)-\iota_{J_p}(x_2)$.
	This shows the result.
\end{proof}

\begin{theorem}
	\label{thm:realM-by-sepgraph}
	Let $M$ be a finitely generated refinement monoid, and let $\mathcal J$ be the associated $I$-system, so that $M\cong M(\mathcal J)$. Then there is an adaptable separated graph $(E,C)$  such that
	$$M(E,C) \cong M(\mathcal J) \cong M.$$
\end{theorem}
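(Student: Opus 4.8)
The strategy is to build the adaptable separated graph $(E,C)$ directly from the finitely generated $I$-system $\mathcal J = (I, \le, (G_i)_{i\in I}, \varphi_{ji}\,(i<j))$ by translating each piece of the $I$-system into graph-theoretic data, and then to verify that the resulting monoid $M(E,C)$ is isomorphic to $M(\mathcal J)$ by comparing the two associated $I$-systems via Proposition \ref{prop:MEC-theIsystem}. I will keep the same poset $I$, with the same partition $I=\Ifree\sqcup\Ireg$, so that the antisymmetrization of the graph I construct recovers $I$ on the nose.

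\textbf{Construction of the graph.}
First I would handle the vertex set component by component. For each $p\in\Ireg$, the group $G_p$ is a finitely generated abelian group; by the standard construction (as in \cite{AP17}) one realizes $G_p$ as the Grothendieck group of the graph monoid of a finite transitive row-finite graph $E_p$ with $|C_w|=1$ and $|s_{E_p}^{-1}(w)|\ge 2$ at every vertex, matching condition (1)--(2) of Definition \ref{def:adaptable-sepgraphs}. For each $p\in\Ifree$ I put a single vertex $v^p$ with a loop structure: the free part $\N\times G_p$ forces $a_{v^p}$ to be a free element, which is exactly what a loop at $v^p$ produces. The key input here is Proposition \ref{prop:generation-of-kernel}: the kernel of $G(\varphi_p)\colon G(M_{J_p})\to G_p$ is generated by finitely many strictly positive elements $x_1,\dots,x_k$, each lying in $\iota_{J_p}(M_{J_p})$. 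For each such generator $x_i$ of the kernel I would create one colour $X^{(p)}_i\in C_{v^p}$ consisting of the loop $\alpha(p,i)$ together with connector edges $\beta(p,i,t)$ whose ranges realize the chosen strictly-positive representative of $x_i$ as a sum of vertices $w$ with $[w]<p$. This is precisely the form demanded in Definition \ref{def:adaptable-sepgraphs}(3), and the relation $a_{v^p}=\mathbf r(X^{(p)}_i)$ then encodes, at the level of the monoid, exactly the relation $x_i=0$ in $G_p$. Setting $k(p)=k$ and wiring the connectors to vertices of the already-constructed lower components completes the definition; minimal free vertices get $s^{-1}(v^p)=\emptyset$.

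\textbf{Verification.}
Having built $(E,C)$, I would first check it is adaptable, which amounts to confirming conditions (1)--(3) of Definition \ref{def:adaptable-sepgraphs} hold by construction, together with finiteness of $I$. Then I apply Proposition \ref{prop:MEC-theIsystem}: it produces an explicit $I$-system $\mathcal J''$ with $M(E,C)\cong M(\mathcal J'')$, whose groups $G''_p$ are presented by generators $x^p_w$ and the relations \eqref{eq:relation-for-w-regular}--\eqref{eq:relation-for-w-free}. The heart of the argument is to exhibit an isomorphism of $I$-systems $\mathcal J''\cong\mathcal J$: I must show $G''_p\cong G_p$ compatibly with the connecting maps. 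For $p\in\Ireg$ this follows from Remark \ref{rem:grothendieck-for-regulars}, since $G''_p$ is the Grothendieck group of the hereditary restriction, which we engineered to be $G_p$. For $p\in\Ifree$, the generators $x^p_w$ modulo relation \eqref{eq:relation-for-w-free} present exactly $G(M_{J_p})/\langle x_1,\dots,x_k\rangle$, and by Proposition \ref{prop:generation-of-kernel} this quotient is $G(M_{J_p})/\ker(G(\varphi_p))\cong G_p$. Checking that the connecting homomorphisms $\varphi''_{p,q}$ correspond to the $\varphi_{p,q}$ under these identifications is then a direct verification on generators.

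\textbf{Main obstacle.}
The delicate point is the free case. I must choose the connector ranges so that the monoid relation $a_{v^p}=\mathbf r(X^{(p)}_i)$ imposes precisely the kernel relation $x_i=0$ and \emph{nothing more}, while simultaneously keeping $a_{v^p}$ a genuinely free element (not regular) and ensuring no spurious identifications collapse the poset structure. This is exactly where Proposition \ref{prop:generation-of-kernel} is essential: expressing each kernel generator as a difference of strictly positive elements is what allows it to be encoded by honest (positive) connector edges attached to a loop, so that the presentation of $G''_p$ in Definition \ref{def:Isystemofadpasepgraph} matches the presentation $G(M_{J_p})/\ker(G(\varphi_p))$ of $G_p$. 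Verifying that these two finitely presented abelian groups coincide, and that the surjectivity condition (c)(2) in Definition \ref{def:I-system} is respected, is where the real work lies.
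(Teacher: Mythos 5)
Your construction coincides with the paper's: regular and minimal primes are handled by the method of \cite[Proposition 5.13]{AP17}, and each non-minimal free prime $p$ receives a single vertex $v^p$ whose colours $X^{(p)}_i$ consist of one loop together with connectors realizing a strictly positive generator $x_i$ of $\ker G(\varphi_p)$, exactly as supplied by Proposition \ref{prop:generation-of-kernel}; the induced relations are $v^p=v^p+\widehat{x}_i$, which is relation \eqref{eq:arrows-for-free} of the paper. Where you genuinely differ is the verification. The paper runs an induction over lower subsets $J$ of $I$, carrying a monoid isomorphism $\gamma_J\colon M(J)\to M(E_J,C^J)$; at a free prime it extends $\gamma_J$ to $\gamma_{J'}$ on $M_p=\N\times G_p$ via a map $\gamma_p\colon G_p\to G'_{v^p}$ obtained from the commutative diagram \eqref{diagram:cap-avall}, checks well-definedness with \cite[Corollary 1.8]{AP16}, and gets injectivity by writing down an explicit inverse $\delta_{J'}$ (possible because $x_i\in\ker G(\varphi_p)$ forces $p=p+x_i$ in $M(J')$); in particular it never proves directly that the component group of the new graph is $G_p$. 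You instead reduce everything to Proposition \ref{prop:MEC-theIsystem} and propose to exhibit an isomorphism of $I$-systems $\mathcal J''\cong\mathcal J$ by a presentation argument: for free $p$, the group $G''_p$ of Definition \ref{def:Isystemofadpasepgraph} is the Grothendieck group of the graph monoid below $p$ modulo the classes $\widehat{x}_1,\dots,\widehat{x}_k$, hence isomorphic to $G(M_{J_p})/\ker G(\varphi_p)\cong G_p$. This route is viable and arguably more conceptual, but two points you gloss over deserve emphasis. First, the split ``construct the whole graph, then verify'' cannot be maintained: to write down the connectors at a free prime $p$ you must express the abstract elements $x_i\in G(M_{J_p})$ as non-negative integer combinations of vertices of the part of the graph already built, and this presupposes the isomorphism $\gamma_J$ (equivalently the identification $G(M(E_J,C^J)_{J_p})\cong G(M_{J_p})$) on the lower levels; so construction and verification must be interleaved in an induction, exactly as in the paper. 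Second, an isomorphism of $I$-systems requires not only group isomorphisms $G''_p\cong G_p$ but also naturality against the connecting maps, i.e.\ commutativity of the squares relating $\varphi''_{p,q}$ to $\varphi_{p,q}$; your ``direct verification on generators'' is precisely the computation the paper performs with the auxiliary maps $\tau_q$, so it is routine but not free, and it too uses the inductively carried identifications. With these two caveats incorporated, your argument closes; what the paper's hands-on route buys is that the group-level comparison and its naturality never have to be made explicit, since bijectivity of $\gamma_{J'}$ is certified by the explicit inverse $\delta_{J'}$.
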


\begin{proof}
	The proof follows the lines of the proof of \cite[Proposition 5.13]{AP17}. This result says that, if the natural map
	$G(\varphi_p ) \colon G(M_{J_p})  \to G_p$ is an almost isomorphism for every free prime $p$, then there is a
	row-finite directed graph $E$ such that $M\cong M(E)$. (In particular, this holds if every prime in $M$ is regular).
	We will only outline the point in which the proof has to be adapted, recalling some of the relevant notation.

	The proof works by induction. Assume that $J$ is a lower subset of $I$ and that an adaptable separated graph $(E_J, C^J)$ of the desired form has been constructed so that there is
	a monoid isomorphism $$\gamma _J \colon M(J) \to M(E_J, C^J),$$
	where $M(J)$ is the order-ideal of $M$ generated by $J$, sending the canonical semigroup generators to the corresponding sets of vertices, as specified in \cite[p. 113]{AP17}.
	In case $J\ne I$, let $p$ be a minimal element of $I\setminus J$, and write $J' = J\cup \{ p \}$. If $p$ is a regular prime or $p$ is minimal, proceed as in
	the proof of \cite[Proposition 5.13]{AP17}.
	
	Assume that $p$ is a non-minimal free prime. By Proposition \ref{prop:generation-of-kernel}, there are a finite number of strictly positive elements
	$x_1,\dots , x_k$ which generate the kernel of the map $G(\varphi_p)$. Now, using the same arguments as in the proof of \cite[Proposition 5.13]{AP17}, we may find
	elements $\widehat{x}_i\in M (E_J, C^J) $, $i=1, \dots , k$, which are non-negative integer combinations of the vertices of $E_J$ such that $\gamma _J (x_i)= \widehat{x}_i$
	for $i=1,\dots , k$. Observe that $\widehat{x}_i\in H_{\gamma _J(J_p)}$, so that we may consider its class (denoted in the same way) in $M_{\gamma_J (J_p)}$.
	Now, we introduce the adaptable separated graph $(E_{J'},C^{J'})$. We define $E_{J'}^0 = E_J^0 \sqcup \{v^p \}$, and $C^{J'} \setminus C^{J'}_{v^p} = C^J $, that is, the structure of $(E_{J'},C^{J'})$ is the same
	as the structure of $(E_J, C^J)$ when restricted to the vertices of $E_J$. For the new vertex $v^p$ we define $C^{J'}_{v^p} = \{ X_1^{(p)},\dots , X_k^{(p)}  \}$, where each $X_i^{(p)}$ has the form 
	described in Definition \ref{def:adaptable-sepgraphs}(3), and the edges $\alpha (p,i), \beta (p,i,t)$, $t=1,\dots , g(p,i)$ are chosen in such a way that the relations
	\begin{equation}
	\label{eq:arrows-for-free}
	v^p = v^p+ \widehat{x}_i
	\end{equation}
	are satisfied in the graph monoid $M(E_{J'}, C^{J'})$, for $i=1,\dots ,k$.
	(Here we set $k(p)= k$).

	By Proposition \ref{prop:MECisprimelygenerated}, $M(E_{J'},C^{J'})$ is a primely generated conical refinement monoid. Its corresponding system has been determined 
	in Proposition \ref{prop:MEC-theIsystem}.
	In particular, we know that the set of primes of
	$M(E_{J'},C^{J'})$ is $\mathbb P (M(E_{J},C^{J})) \cup \{ v^p \}$ and that $v^p$ is a free prime in $M(E_{J'},C^ {J'})$. 
	Consequently, we have that the archimedian component $M(E_{J'},C^{J'})[v^p]$ of $M(E_{J'},C^{J'})$ at $v^p$ satisfies
	$$M(E_{J'},C^{J'})[v^p] = \N \times G'_{v^p}$$
	for some abelian group $G'_{v^p}$, and that the map
	$\phi_p : M(E_{J'},C^{J'})_{\gamma _J(J_p)} \to G'_{v^p}$ induced by the various
	semigroup homomorphisms
	$$
	\begin{array}{crcl}
	\phi_q^p\colon & M(E_{J'},C^{J'})_{\gamma _J (q)}  & \rightarrow   & G'_{v^p}  \\
	& y & \mapsto  &   (v^p + y ) - v^p
	\end{array}
	$$
	for $q<p$ is surjective. So, we obtain a surjective group homomorphism
	$$G(\phi _p ) \colon  G(M(E_{J'},C^{J'})_{\gamma_J(J_p)}) \to G'_{v^p}.$$
	In order to simplify the notation, we will write $M(E_{J'},C^{J'})_{J_p}$ instead of $M(E_{J'},C^{J'})_{\gamma _J(J_p)}$.
	
	It is readily seen that the natural map $M(E_J,C^J) \to M(E_{J'},C^{J'})$ defines a monoid isomorphism from $M(E_J,C^J)$ onto
	an order-ideal of $M(E_{J'},C^{J'})$; hence, we will identify $M(E_J,C^J)$ with its image without further comment.
	Moreover, the component  $M(E_{J'},C^{J'})_{J_p}$ clearly coincides with the component
	$M(E_J, C^J)_{J_p}$.
	
	Now, the monoid isomorphism $\gamma _J \colon M(J) \to M(E_J,C^J)$ restricts to a semigroup isomorphism
	$M_{J_p} \to M(E_J,C^J)_{J_p}$, which induces a group isomorphism
	$$\widetilde{\gamma}_{J_p} \colon G(M_{J_p}) \to G(M(E_{J},C^J)_{J_p})$$
	of the Grothendieck groups.
	Set $K:= \ker (G(\phi_p ))$, and notice that the relation (\ref{eq:arrows-for-free})
	implies that $\widetilde{\gamma}_{J_p}  (x_i)=\widehat{x}_i \in K $ for $i=1,\dots, k$.
	
	Hence, there is a commutative diagram with exact rows
	\begin{equation}\label{diagram:cap-avall}
	\begin{CD}
	0 @>>> \langle x_1,\dots ,x_k \rangle  @>>> G(M_{J_p}) @>{G(\varphi_p )}>> G_p @>>> 0 \\
	& &    @VVV  @VV{\widetilde{\gamma}_{J_p}}V  @VV{ \gamma_ p}V \\
	0 @>>> K @>>> G(M(E_J,C^J)_{J_p}) @>{G(\phi_p )}>> G'_{v^p} @>>> 0 \, \, ,
	\end{CD}
	\end{equation}
	where $\gamma _p \colon G_p \to G'_{v_p}$ is the map induced from the cokernel of the inclusion $\langle x_1,\dots ,x_k \rangle \hookrightarrow G(M_J)$ to the cokernel of the inclusion
	$K \hookrightarrow G(M(E_J,C^J)_{J_p})$. Notice that $\gamma_p$ is an onto map.
	
	We now define the map
	$$ \gamma _{J'} \colon M(J') \to M(E_{J'},C^{J'})$$
	extending the monoid isomorphism $\gamma _J: M(J)\rightarrow M(E_J,C^J)$, and defining $\gamma_{J'}$ on the component $M_p\cong \N\times G_p$ of $M(J')$ by the formula
	$$\gamma_{J'}( mp + g) = m v^p + \gamma_p (g) $$
	for $m\in \N$ and $g\in G_p$.
	By \cite[Corollary 1.8]{AP16}, to show that $\gamma_{J'}$ is a well-defined monoid homomorphism, it suffices  to show that
	if $q< p$ and $y\in \AC{M}{q}=M_q$ then $\gamma _{J'}(y) + \gamma _{J'}(p) = \gamma_{J'}(\varphi_{p,q}(y) + p)$, that is,
	$\gamma _{J}(y) + v^p = \gamma_{p}(\varphi_{p,q}(y))+ v^p$. For $x\in M_{J_p}$, we may define a map
	$$\tau_q \colon M_q \to G(M_{J_p})$$
	by $\tau _q(y) = (x+y) -x \in G(M_{J_p})$. The map $\tau_q$ is a semigroup homomorphism and does not depend on the particular choice of $x\in M_{J_p}$.
	Moreover, we have 
	$\varphi_{p,q} = G(\varphi _p) \circ \tau _q$. Analogously, we have a map
	$$\tau_{\gamma_J(q)} \colon M(E_{J'},C^{J'})_{\gamma_J(q)} \to  G(M(E_{J'},C^{J'})_{J_p}) = G(M(E_{J},C^J)_{J_p})$$
	such that  $\phi^{v^p}_{\gamma_J(q)} = G(\phi_p ) \circ \tau _{\gamma_J(q)}$, and clearly
	$\widetilde{\gamma} _{J_p} \circ \tau_q = \tau_{\gamma _J (q)} \circ \gamma_J|_{M_q} $.
	
	Using this fact, and the commutativity of (\ref{diagram:cap-avall}), we have that
	\begin{align*}
		\gamma _p (\varphi_{p,q} (y)) + v^p & = \gamma _p (G(\varphi _p) (\tau_q  (y))) + v^p \\
		&  =  G(\phi_p) ( \widetilde{\gamma} _{J_p} (\tau _q (y))) + v^p \\
		& =  G(\phi_p) ( \tau_{\gamma_J (q)} (\gamma_J(y))) + v^p \\
		& = \phi^{v^p}_{ \gamma_J(q)} (\gamma _J(y)) + v^p  \\
		& = ((v^p + \gamma_J (y) )-v^p) + v^p  \\
		& = v^p + \gamma _J (y) \, ,
	\end{align*}
	as desired.
	
	This shows that there is a well-defined monoid homomorphism 
	$$\gamma_{J'} \colon M(J') \to M(E_{J'},C^{J'})$$ 
	sending the canonical semigroup generators of $M(J')$ to the corresponding canonical
	sets of vertices
	seen in $M(E_{J'}, C^{J'})$. In particular, $\gamma_{J'} $ is an onto map.
	
	In order to prove the injectivity of $\gamma_{J'} $, we can build an inverse map $\delta_{J'}:  M(E_{J'},C^{J'}) \to M(J')$,
	as follows: on $M(E_{J},C^J)$ we define $\delta_{J'}$ to be $\gamma_J^{-1}$, while $\delta_{J'}(v^p):=p$. Notice that the only
	relations on $M(E_{J'},C^{J'})$ not occurring already in $M(E_J, C^J)$ are $v^p=v^p+\widehat{x}_i$, $i=1,\dots ,k$, where $\gamma_J(x_i)=\widehat{x}_i$. Thus, $\delta_{J'}(\widehat{x}_i)=x_i$.
	But $x_1,\dots , x_k$ generate the kernel of the map
	$$G(\varphi_p): G(M_{J_p})\rightarrow G_p\hookrightarrow \widehat{G}_p=\Z\times G_p,$$
	so that $(p+x_i)-p$ equals $0$ in $\widehat{G}_p$. Hence, the relations $p=p+x_i$ hold in $M(J')$, for $i=1,\dots ,k$. Thus, $\delta_{J'}$ is a well-defined monoid
	homomorphism, and it is the inverse of $\gamma_{J'}$. This completes the proof of the inductive step.
\end{proof}
\section*{Acknowledgments}

This research project was initiated when the authors were at the Centre de Recerca Matem\`atica as part of the Intensive Research Program \emph{Operator algebras: dynamics and interactions} in 2017, 
and the work was significantly supported by the research environment and facilities provided there. The authors thank the Centre de Recerca Matem\`atica for its support.


\begin{thebibliography}{200}

\bibitem{AAS} \textsc{G. Abrams, P. Ara, M. Siles Molina}, Leavitt Path
Algebras, Springer Lecture Notes in Mathematics, vol. 2191, 2017.

\bibitem{Areal} \textsc{P.  Ara}, The realization problem for von Neumann regular rings,
in \emph{Ring Theory 2007. Proceedings of the Fifth China-Japan-Korea Conference}, (H. Marubayashi, K. Masaike, K. Oshiro, M. Sato, Eds.), Hackensack, NJ (2009) World Scientific, pp.~21--37.

\bibitem{Ara10} \textsc{P. Ara}, The regular algebra of a poset, \emph{Trans. Amer. Math. Soc.} \textbf{362} (2010), no. 3, 1505--1546.

\bibitem{ABP} \textsc{P. Ara, J. Bosa, E. Pardo}, The realization problem for finitely generated refinement monoids, {\it In preparation}.

\bibitem{ABPS} \textsc{P. Ara, J. Bosa, E. Pardo, A. Sims}, The groupoids of adaptable separated graphs and their type semigroups, {\it Preprint}. 

\bibitem{AB} \textsc{P. Ara, M. Brustenga}, The regular algebra of a quiver, \emph{J. Algebra} \textbf{309} (2007), 207--235.

\bibitem{AE} \textsc{P. Ara, R.  Exel}, Dynamical systems associated to separated graphs, graph algebras, and paradoxical decompositions, \emph{Adv. Math.} \textbf{252} (2014), 748--804.

\bibitem{AG12} \textsc{P. Ara, K.R. Goodearl}, Leavitt path algebras of separated graphs,  \emph{J. Reine Angew. Math.} \textbf{669} (2012), 165--224.

\bibitem{AGOP} \textsc{P. Ara, K.R. Goodearl, K.C. O'Meara, E. Pardo}, Separative cancellation for projective modules over exchange rings, \emph{Israel J. Math.} \textbf{105} (1998), 105--137.

\bibitem{AMFP} \textsc{P. Ara, M.A. Moreno, E. Pardo}, Nonstable K-Theory for graph algebras,
\emph{Algebra Rep. Th.} \textbf{10} (2007), 157-178.

\bibitem{AP16} \textsc{P. Ara, E. Pardo}, Primely generated refinement monoids, \emph{Israel J. Math.} \textbf{214} (2016), 379--419.

\bibitem{AP17} \textsc{P. Ara, E. Pardo}, Representing finitely generated refinement monoids as graph monoids, \emph{J. Algebra} \textbf{480} (2017), 79--123.



\bibitem{Brook} \textsc{G. Brookfield}, Cancellation in primely generated refinement monoids, \emph{Algebra Universalis} \textbf{46} (2001), 343--371.



\bibitem{Dobb84} \textsc{H. Dobbertin}, Primely generated regular refinement monoids, \emph{J. Algebra} \textbf{91} (1984), 166--175.

\bibitem{ExelBraz} \textsc{R. Exel}, Inverse semigroups and combinatorial $C^*$-algebras, \emph{Bull. Braz. Math. Soc. (N.S.)} \textbf{39} (2008), 191--313.




\bibitem{directsum} \textsc{K.R.  Goodearl}, Von Neumann regular rings and direct sum
decomposition problems, in ``Abelian groups and modules" (Padova,1994), Math. and its Applics. 343, pp. 249--255, Kluwer Acad. Publ., Dordrecht (1995).

\bibitem{Ket} \textsc{J. Ketonen}, {The structure of countable Boolean algebras}, \emph{Annals of Math.} \textbf{108} (1978), 41--89.


\bibitem{OPR}  \textsc{E. Ortega, F. Perera, M.  R\o rdam}, The corona factorization property and refinement monoids, \emph{Trans. Amer. Math. Soc.} \textbf{363} (2011), 4505--4525.

\bibitem{Paterson} \textsc{A.L.T. Paterson}, ``Groupoids, inverse semigroups, and their operator algebras'', Progress in Mathematics, 170. Birkh\"auser Boston, Inc., Boston, MA, 1999.

\bibitem{Pierce}
\textsc{R. S. Pierce}, {Countable Boolean algebras}, in \emph{Handbook of Boolean Algebras}, Vol.~3 (J. D. Monk and R. Bonnet, Eds.), Amsterdam (1989) North-Holland, pp.~775--876.


\bibitem{RS} \textsc{T. Rainone, A. Sims}, {A dichotomy for groupoid C*-algebras}, \emph{arXiv:1707.04516} [math.OA].







\bibitem{Weh} \textsc{F. Wehrung}, {Refinement monoids, equidecomposable types, and Boolean inverse semigroups}, \emph{Springer Lecture Notes in Math.} 2188 (2017).


\end{thebibliography}
\end{document}